\documentclass[11pt,letterpaper,reqno]{amsart}
\usepackage{tikz}
\usetikzlibrary{positioning, shapes.geometric, arrows.meta, calc}
\usepackage{amssymb}
\usepackage{amsmath}
\usepackage{amsthm}
\usepackage{amsfonts}
\usepackage{bbm}
\usepackage{enumitem} 
\usepackage{pgfplots}
\pgfplotsset{compat=1.18} 
\usepackage{booktabs}
\usepackage{graphicx}
\usepackage[T1]{fontenc}
\usepackage{doi}
\usepackage{float} 
\usepackage{bookmark}
\usepackage{hyperref}
\hypersetup{pdfstartview={FitH}}

\newtheorem{theorem}{Theorem}[section]
\newtheorem{lemma}[theorem]{Lemma}
\newtheorem{proposition}[theorem]{Proposition}
\newtheorem{corollary}[theorem]{Corollary}
\newtheorem{claim}{Claim}

\newtheorem{problem}[theorem]{Problem}

\theoremstyle{definition}

\newtheorem{remark}[theorem]{Remark}
\newtheorem{definition}[theorem]{Definition}
\numberwithin{equation}{section}

\newcommand{\cF}{\mathcal F}
\newcommand{\cE}{\mathcal E}

\newcommand{\cA}{\mathcal A}

\newcommand{\cH}{\mathcal H}
\newcommand{\cM}{\mathcal M}

\begin{document}

\title[Antichains with level multiplicity]{An Erd\H{o}s--Trotter problem on antichains with multiplicity $r$ on each occurring level}

\author[Y.~He]{Yixin He}
\author[Q.~Tang]{Quanyu Tang}

\address{School of Mathematical Sciences, Fudan University, Shanghai 200433, P. R. China}
\email{hyx717math@163.com}
\address{School of Mathematics and Statistics, Xi'an Jiaotong University, Xi'an 710049, P. R. China}
\email{tang\_quanyu@163.com}

\subjclass[2020]{05D05, 06A07}

\keywords{Erd\H{o}s problem, extremal set theory, antichain}

\begin{abstract}
Fix an integer $r\ge2$. For each $n$ we consider families $\mathcal F\subseteq 2^{[n]}$ that form an antichain and have the property that, for every $t$, if there exists $A\in\mathcal F$ with $|A|=t$ then there exist at least $r$ members of $\mathcal F$ of size $t$. A problem of Erd\H{o}s and Trotter asserts that, for each fixed $r$, there exists a threshold $n_0(r)$ such that whenever $n>n_0(r)$ one can achieve $n-3$ distinct set sizes in such a family, and asks for estimates on $n_0(r)$.
We compute that $n_0(2)=3$ and $n_0(3)=8$.
For all $r\ge4$ we prove matching linear bounds up to lower-order terms, namely
$$
2r+2  \le n_0(r) \le 2r+2\log_2 r + O(\log_2\log_2 r).
$$
In particular, $n_0(r) = 2r + o(r)$.
\end{abstract}

\maketitle

\section{Introduction}

The study of antichains in the Boolean lattice $2^{[n]}$ (ordered by inclusion) is a cornerstone of extremal set theory, dating back to Sperner's theorem~\cite{Sp28}, which determines the maximum possible size of an antichain. Since then, antichains and their generalizations have been extensively investigated; see, e.g., the monographs of Anderson~\cite{An87} and Engel~\cite{Engel97}. While classical results such as the LYM inequality focus on bounding the cardinality of a family, Paul Erd\H{o}s often emphasized ``profile'' questions, asking which sets of cardinalities an antichain can realize under additional constraints.

In~\cite[Eq. (10.28)]{Er81} (see also~\cite[p.~119]{Gu83}), Erd\H{o}s and Trotter proposed the following problem, which also appears as Problem~\#776 on Bloom's Erd\H{o}s Problems website~\cite{EP776}.

\begin{problem}[\cite{Er81,Gu83}]\label{prob:EP776}
Let $S$ be a set with $|S| = n$. Consider a family $(A_k)$ of subsets of $S$ where no $A_k$ contains any other and such that, for every $t$, if there is an $A_k$ with $|A_k| = t$ then there are exactly $r$ different $A$'s of size $t$. If $r = 1$ and $n > 3$, one can always give $n - 2$ sets $A_k$ and one cannot give $n - 1$. If $r > 1$ and $n >$ some number $n_0 = n_0(r)$, one can always give $r(n-3)$ sets $A_k$ but one cannot give $r(n-2)$ such sets. 

Give estimates for $n_0(r)$.
\end{problem}

In~\cite{Gu83}, Guy writes:
\begin{quote}
\emph{[$\ldots$] We have no satisfactory estimate of $n_0(r)$. The content of the previous two paragraphs will be a small subset of a forthcoming paper of Erd\H{o}s, Szemer\'{e}di and Trotter.}
\end{quote}
To the best of our knowledge, we have not been able to locate the announced paper of
Erd\H{o}s, Szemer\'{e}di and Trotter, nor have we found any subsequent work in the literature
providing a satisfactory estimate for $n_0(r)$.

In this paper we obtain explicit upper and lower bounds for $n_0(r)$.

\subsection{A reformulation of Problem~\ref{prob:EP776}}\label{subsec:reformulation}
Fix integers $n\ge 1$ and $r\ge 1$.
For a family $\cF\subseteq 2^{[n]}$ write
\[
S(\cF):=\{\,|A|:\ A\in\cF\,\},
\qquad 
\cF_t:=\{A\in\cF:\ |A|=t\}\ \ (t=0,1,\dots,n).
\]
Call $\cF$ an \emph{$r$-multiplicity antichain} if it satisfies:
\begin{enumerate}[label=(\roman*)]
\item (\emph{Antichain}) $A\not\subseteq B$ for all distinct $A,B\in\cF$.
\item (\emph{$r$-multiplicity}) For every $t\in S(\cF)$ one has $|\cF_t|\ge r$.
\end{enumerate}
Following Guy's statement \cite{Gu83}, the underlying extremal quantity is
\[
g(n,r) := \max\bigl\{|S(\cF)|:\ \cF\subseteq 2^{[n]}\ \text{is an $r$-multiplicity antichain}\bigr\},
\]
i.e.\ the largest number of distinct cardinalities that can occur in an $r$-multiplicity antichain on $[n]$. Guy also reports that for $r>1$ there exists a threshold $n_0(r)$ such that, for all sufficiently large $n$,
one always has $g(n,r)=n-3$ (equivalently, $n-3$ distinct sizes are always achievable but $n-2$ are not).

\begin{remark}\label{rem:eq-vs-ge}
In the original formulation of Erd\H{o}s--Trotter~\cite[Eq.~(10.28)]{Er81}, one requires that for every occurring level $t\in S(\cF)$
there are \emph{exactly} $r$ sets of size $t$, i.e.\ $|\cF_t|=r$.
Guy~\cite{Gu83} relaxes this to $|\cF_t|\ge r$. For the extremal quantity $g(n,r)$ studied in this paper, these two conventions are equivalent.
Indeed, given any antichain $\cF$ with $|\cF_t|\ge r$ for all $t\in S(\cF)$,
select an arbitrary subfamily $\cF'_t\subseteq \cF_t$ of size exactly $r$ for each such $t$ and set
$\cF':=\bigcup_{t\in S(\cF)}\cF'_t$.
Then $\cF'$ is still an antichain, satisfies $|\cF'_t|=r$ for all $t\in S(\cF')$,
and preserves the set of occurring sizes: $S(\cF')=S(\cF)$.
Hence the maximum possible value of $|S(\cF)|$ is the same under either convention,
and we adopt Guy's inequality formulation for convenience.
\end{remark}

Accordingly, the threshold $n_0(r)$ appearing in Problem~\ref{prob:EP776} can be equivalently defined as follows.

\begin{definition}\label{def:threshold}
Fix an integer $r\ge2$. We define $n_0(r)$ to be the smallest integer $n_0$ such that
\[
g(n,r)=n-3
\]for every integer $n>n_0$.
\end{definition}

For small values of $r$ one can determine $n_0(r)$ exactly by computation.
In particular, $n_0(2)=3$ and $n_0(3)=8$; see Appendix~\ref{app:small-r} for details.

We now state our main results.

\begin{theorem}\label{thm:main_lower_bound}
For every integer $r\ge 4$, one has
\[
n_0(r) \ge 2r+2.
\]
\end{theorem}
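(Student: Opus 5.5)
We prove the statement by showing that $g(2r+2,r)\le n-4$ for $n=2r+2$. This is a failure at some $n\ge 2r+2$, which is what the claim requires. Indeed, $n_0(r)$ is the least $n_0$ with $g(n,r)=n-3$ for all $n>n_0$, so a single failure at $n=2r+2$ forces $n_0(r)\ge 2r+2$. So fix $n=2r+2$ with $r\ge4$, and suppose for contradiction that $\cF$ is an $r$-multiplicity antichain with $|S(\cF)|=2r-1$.

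\textbf{Step 1: pinning down the levels.} Levels $0$ and $n$ contain one set each, so they cannot occur. Hence $S(\cF)\subseteq\{1,\dots,2r+1\}$, and $S(\cF)$ omits exactly two of these values.

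Suppose level $1$ occurs. Then $r$ singletons lie in $\cF$. Every other member avoids those $r$ points, so it lives on the remaining $r+2$ points. Its size is then at most $r+1$ (the full $(r+2)$-set would be a lone set on its level). Hence $|S(\cF)|\le 1+r<2r-1$, a contradiction.

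Passing to complements preserves the antichain and multiplicity conditions and maps level $t$ to level $n-t$. So level $2r+1$ cannot occur either. Therefore $S(\cF)=\{2,3,\dots,2r\}$ exactly.

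\textbf{Step 2: structure forced by levels $2$ and $2r$.} Let $G$ be the graph whose $r$ edges are the members of $\cF_2$. Let $H$ be the graph whose $r$ edges are the complements of the members of $\cF_{2r}$. The antichain conditions translate as follows.
\begin{itemize}[label=$\cdot$]
\item Every edge of $G$ meets every edge of $H$.
\item Every member of $\cF$ of size $3,\dots,2r-1$ contains no edge of $G$ (it is $G$-independent).
\item Every such member meets every edge of $H$ (it is a vertex cover of $H$).
\end{itemize}

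Next we classify cross-intersecting pairs of graphs with $r\ge4$ edges each. If $G$ has a matching of size $3$, no edge can meet all three, which is impossible. If $G$ has two disjoint edges, then $H$ lies in the $4$-cycle between them; a short check using $r\ge4$ shows this is impossible too. Hence $G$ is intersecting with $r\ge4$ edges, so it is a star (a triangle has only $3$ edges). The same argument applies to $H$. Two stars with at least $4$ edges each cross-intersect only if they share their centre $v$.

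So $G$ is a star at $v$ with leaf set $L_G$, and $H$ is a star at $v$ with leaf set $L_H$, where $|L_G|=|L_H|=r$.

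\textbf{Step 3: splitting the middle levels.} Let $A\in\cF$ have size in $[3,2r-1]$. There are two cases.
\begin{itemize}[label=$\cdot$]
\item If $v\in A$, then $A\cap L_G=\emptyset$. So $A=\{v\}\cup X$ with $X\subseteq U:=[n]\setminus(L_G\cup\{v\})$, where $|U|=r+1$. In particular $|A|\le r+2$.
\item If $v\notin A$, then $L_H\subseteq A$. So $A=L_H\cup Y$ with $Y\subseteq W:=[n]\setminus(L_H\cup\{v\})$, where $|W|=r+1$. In particular $|A|\ge r$.
\end{itemize}
Consequently, levels $3,\dots,r-1$ are carried entirely by the first type, and levels $r+3,\dots,2r-1$ entirely by the second type.

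The first-type sets of sizes $3,\dots,r-1$ form an $r$-multiplicity antichain $\{X\}$ on the $(r+1)$-set $U$, with sizes $2,\dots,r-2$. Symmetrically, the second-type sets give an $r$-multiplicity antichain $\{Y\}$ on $W$, with sizes $3,\dots,r-1$. Together with the levels $r,r+1,r+2$ that may mix both types, this is the configuration we must rule out.

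\textbf{Step 4: the counting contradiction (main obstacle).} We must show that an $r$-multiplicity antichain on $r+1$ points cannot realize all required levels, with the mixed levels $r,\dots,r+2$ also covered. The plan is to reapply the Step 1 style argument inside $U$ (and, by complementation, inside $W$).

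First apply it to the smallest level present in $U$. The level $|X|=2$ consists of $r$ pairs on $r+1$ points. These pairs are forbidden inside every larger $X$. Together with the LYM inequality on $U$, this should force the levels $|X|\ge 3$ to be too small.

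Here the mixed levels are delicate. A first-type set $\{v\}\cup X$ and a second-type set $L_H\cup Y$ can only be comparable through $L_H\subseteq\{v\}\cup X$. So we first locate $L_H$ relative to $U$, using that $L_G\cap U=\emptyset$ and $|L_H|=r$. Then we run the LYM count $\sum_t |\cF_t|/\binom{r+1}{t}\le 1$ on each side.

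The sums $r\sum_t \binom{r+1}{t}^{-1}$ over the required ranges exceed $1$ once $r\ge4$. This is where the hypothesis $r\ge4$ is used; it is also used in Step 2, to exclude triangles and small $4$-cycle configurations. Verifying this LYM arithmetic at the boundary levels is the step we expect to need the most care.
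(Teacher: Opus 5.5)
Your Steps 1--3 are correct and follow the paper's proof closely. Passing to exactly $r$ sets per level is legitimate, and it even removes the paper's Case~1, since it forces $|U|=r+1$. The gap is Step~4. That is where the contradiction actually has to be produced, and the mechanism you propose does not produce it.

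\textbf{Why the LYM count fails.} Your arithmetic claim is false. On the pure ranges the LYM sums stay below $1$ for every $r\ge4$ and tend to $0$ as $r$ grows. For instance, when $r=4$ the only pure level in $U$ is $|X|=2$, which contributes $4/\binom{5}{2}=2/5$. The pure levels are not an obstruction at all: for $r=4$ they ask only for four pairs in the $5$-set $U$ and four triples in the $5$-set $W$. Adding the forced mixed-level counts does not rescue the argument either. Level $r$ has at most one second-type set ($L_H$ itself), and level $r+2$ has at most one first-type set ($\{v\}\cup U$). So LYM on $U$ and on $W$ does cap the first- and second-type parts of $\cF_{r+1}$ separately. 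But for large $r$ these caps are close to $r$: for $r=20$ each allows $16$ sets, which is compatible with $|\cF_{r+1}|\ge r$. No counting of this kind closes the case.

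\textbf{The missing idea.} You need a direct structural bound at the mixed levels, using the extreme pure levels.
\begin{itemize}
\item \emph{First type at level $r+1$.} Let $\cH$ be the set of pairs $e\subseteq U$ with $\{v\}\cup e\in\cF_3$. All of $\cF_3$ is first type because $3\le r-1$, which is where $r\ge4$ enters. A first-type member of $\cF_{r+1}$ has the form $\{v\}\cup(U\setminus\{u\})$, and it contains $\{v\}\cup e$ unless $u\in e$. So $u$ must lie in all of the at least $r\ge2$ distinct pairs of $\cH$, which leaves at most one such set.
\item \emph{Second type at level $r+1$.} Dually, $\cF_{2r-1}$ is entirely second type, since $2r-1\ge r+3$. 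Its members are $L_H\cup(W\setminus e')$ for at least $r$ distinct pairs $e'\subseteq W$. A second-type set $L_H\cup\{w\}$ at level $r+1$ avoids lying inside all of them only if $w$ belongs to every $e'$. Again there is at most one choice.
\end{itemize}
Together these give $|\cF_{r+1}|\le 2<r$, the desired contradiction.

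The paper takes a slightly different route to the same end. It bounds the first-type sets at level $r+1$ as above. It then shows that the at least $r-1$ remaining sets $L_H\cup\{a_i\}$ force every member of $\cF_{r+2}$ to be $L_H\cup\{b,b'\}$ with $\{b,b'\}\subseteq W\setminus\{a_1,\dots,a_{r-1}\}$. That is a $2$-element set, so $|\cF_{r+2}|\le 1$. Without an argument of this kind, your proposal does not reach a contradiction.
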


\begin{theorem}\label{thm:main_upper_bound}
For every integer $r\ge 2$, one has
\[
n_0(r) \le 2r+2\log_2 r + O(\log_2\log_2 r).
\]
\end{theorem}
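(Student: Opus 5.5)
The proof splits into two independent halves. The first is the universal bound $g(n,r)\le n-3$ for $r\ge2$ and $n\ge4$. The second is an explicit construction showing $g(n,r)\ge n-3$ as soon as $n\ge 2r+2\log_2 r+C\log_2\log_2 r$.

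\emph{Upper half.} Levels $0$ and $n$ cannot occur, since each contains only one set, which is comparable to everything. So it suffices to show that level $1$ (and, dually, level $n-1$) cannot occur together with $n-3$ other levels. The duality comes from complementation $A\mapsto[n]\setminus A$, which preserves both the antichain property and $r$-multiplicity. Suppose level $1$ occurs. Then there are $r\ge2$ singletons $\{x_1\},\dots,\{x_r\}$, and every other member of $\cF$ avoids $X=\{x_1,\dots,x_r\}$. Hence every other size is at most $n-r\le n-2$. In particular level $n-1$ is absent, and the occurring levels lie in $\{1,\dots,n-r\}$. Moreover level $n-r$ would consist of the single set $[n]\setminus X$, so it cannot carry $r$ sets. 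Thus $|S(\cF)|\le n-r-1\le n-3$. If neither $1$ nor $n-1$ occurs, only levels $2,\dots,n-2$ are available, again giving at most $n-3$.

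\emph{Lower half: construction.} We must place $r$ pairwise incomparable sets on every level $2,3,\dots,n-2$. The plan is to split $[n]=P\sqcup Q\sqcup R$, where $P$ and $Q$ are two ``coding blocks'' of size $k\approx\log_2 r+\tfrac12\log_2\log_2 r+O(1)$ and $R$ is the remaining ``ground'' of size about $2r$. By Sperner's theorem we choose $k$ so that $\binom{k}{\lfloor k/2\rfloor}\ge r$. This gives $r$ pairwise incomparable codewords $c_1,\dots,c_r\subseteq P$ (and similarly $d_1,\dots,d_r\subseteq Q$) of a common size. Levels are handled in three regimes.
\begin{enumerate}[label=(\alph*)]
\item Middle levels use sets $c_i\cup d_j'\cup I_t$. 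Here $I_t\subseteq R$ is an initial segment whose length varies with $t$, and the codes are arranged so that a longer ground segment always comes with a ``complementary'' code. For instance, we pair $c_i$ with $Q\setminus d_i$ in an anti-monotone way, so that a growth in $R$ is compensated by a non-containment in $P\cup Q$. This is the Erd\H{o}s--Trotter trick for $r=1$, made robust by the codes.
\item Low levels $2,\dots,\approx r$ use sets of the form $\{y\}\cup(\text{small piece})$ with distinct ground elements $y\in R$. These are incomparable with each other because of the distinct singletons of $R$ they use, and they are incomparable with the middle levels because they miss the fixed code structure.
\item High levels are handled by complementation of the low-level construction, placed in the second block.
\end{enumerate}
The two blocks $P$ and $Q$ are what produce the $2\log_2 r$: one block separates sets within the low regime, and the other does the same for the complemented high regime. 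The ground of size about $2r$ is needed because the low regime consumes about $r$ distinct ground elements for its $r$ disjoint ``witnesses'', and the high regime by duality consumes another $r$.

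\emph{Extension to all larger $n$.} Once a valid family exists at some $n_1$, we propagate it to every $n>n_1$ by a monotonicity step. Given an optimal family on $[n]$ with the extra invariant ``some element $z$ lies in no set of the lowest nonempty level above $2$'' (or the analogous structured invariant from the construction), we adjoin a new element. We add it to every set of the family except the level-$2$ sets, which raises all levels by one. We then insert a fresh level-$2$ family using the new element and the free ground elements. Alternatively, we simply verify directly that the construction above works for every $n\ge n_1$ by letting $|R|$ grow.

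I expect the main obstacle to be the cross-regime incomparability: proving that a low-level set is never contained in a middle-level or high-level set. The low sets are tiny and can easily sit inside big sets. To handle this, I would first try to force every low-level set to contain an element of $P$ outside every codeword used at higher levels (a ``private'' coordinate), and symmetrically for $Q$. I would then count how many private coordinates are needed. This count is where the additional $O(\log_2\log_2 r)$ error term should enter.
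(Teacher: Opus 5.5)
Your upper half is correct; it is a slightly quicker route to the paper's bound $g(n,r)\le n-3$. The lower half, however, is the whole content of the theorem, and it is not a proof. No family is ever specified, and the step you yourself call the main obstacle, incomparability across regimes, is left open. Several ingredients of the sketch also point the wrong way:
\begin{itemize}
\item Separating the $r$ sets \emph{within} a level costs nothing, since distinct sets of equal size are incomparable. What needs separating is \emph{different} levels. With nested ground parts $I_t\subsetneq I_{t'}$ one has $c\cup d'\cup I_t\subseteq c\cup d'\cup I_{t'}$, so each code part can be used on only one level. Blocks of fixed size chosen via $\binom{|P|}{\lfloor |P|/2\rfloor}\ge r$ therefore cannot serve the unboundedly many levels that appear when you let $|R|$ grow.
\item The low and high regimes cannot live on disjoint resources (``in the second block''). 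Every level-$2$ set must meet the complement of every level-$(n-2)$ set. For $r\ge4$ this forces the level-$2$ sets to be a star $\{x,p\}$ and every $(n-2)$-set to avoid the same $x$; this is the star argument in the paper's lower-bound proof.
\item The monotonicity step is inconsistent as written. Adding $z$ to every set except the $2$-sets moves levels $3,\dots,n-2$ to $4,\dots,n-1$, so it is level $3$, not level $2$, that must be refilled. No invariant making this possible is established.
\end{itemize}

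The missing idea is complement-doubling around a common element. The paper builds only levels $2,\dots,k$ with $k=\lfloor n/2\rfloor$, using a family $\widetilde{\cF}$ whose members all contain a fixed element $a$ and have size at most $k$. It then takes $\cA=\widetilde{\cF}\cup\{[n]\setminus F: F\in\widetilde{\cF}\}$. Here $F\not\subseteq[n]\setminus G$ because of $a$, and $[n]\setminus G\not\subseteq F$ by cardinality; when $n$ is even and $|G|=k$, equality is again excluded by $a$. So there is no cross-regime problem and no private-coordinate count is needed.

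Inside $\widetilde{\cF}$, levels (not individual sets) are separated by labels $L_t\subseteq V$ for $4\le t\le k$, which form an antichain: $2$-sets $\{b,v\}$ for small $t$, and $\lfloor m/2\rfloor$-subsets of $V_0$ for the rest. Only about $k$ labels are needed, so $|V|=m+1$ with $\binom{m}{\lfloor m/2\rfloor}\ge k$ suffices. The $r$ sets on level $t$ are $\{a\}\cup L_t\cup B_{t,j}$, with distinct equal-size $B_{t,j}$ taken from a ground $R'$ of size at least $k-2$. Levels $2$ and $3$ use private elements $p_j$ and $u$.

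The budget $1+r+1+(m+1)+(k-2)\le n$, with $k\approx n/2$ and $m\le\log_2 k+\frac12\log_2\log_2 k+3$, gives $n\ge 2r+2\log_2 r+O(\log_2\log_2 r)$. The factor $2$ on $\log_2 r$ comes from doubling one label block, not from two separate code blocks. Because $m$ is tied to $\lfloor n/2\rfloor$ rather than to $r$, the same construction works for every larger $n$, so no monotonicity step is needed.
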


\subsection{Paper organization}
The remainder of the paper is organized as follows.
In Section~\ref{sec:main-lower-b} we prove Theorem~\ref{thm:main_lower_bound}, which provides a lower bound for $n_0(r)$.
In Section~\ref{sec:main-upper-b} we prove Theorem~\ref{thm:main_upper_bound}, which gives an upper bound for $n_0(r)$.
Section~\ref{sec:remarks} collects further remarks and open problems.
Finally, in Appendix~\ref{app:small-r} we determine the exact values $n_0(2)$ and $n_0(3)$.

\section{Preliminaries}

We collect several auxiliary statements that will be used in the proofs of our main theorems.

\begin{lemma}\label{lem:central-binomial-sqrt}
For every integer $m\ge 1$,
\[
\binom{m}{\lfloor m/2\rfloor} \ge \frac{2^m}{2\sqrt m}.
\]
\end{lemma}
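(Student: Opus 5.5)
I would prove the lemma by induction on $m$, reducing first to even $m$ and then tracking how the ratio changes as $m$ grows by $2$.

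\emph{Odd case.} Suppose $m=2k+1$. Then $\binom{2k+1}{k}=\tfrac12\binom{2k+2}{k+1}$. So the odd case follows from the even case at $m+1$, provided
\[
\frac{1}{2}\cdot\frac{2^{m+1}}{2\sqrt{m+1}}\ge\frac{2^m}{2\sqrt m}.
\]
This does not hold, since $\sqrt{m+1}>\sqrt m$. The even bound therefore has to be proved with some slack. I will aim for the sharper even-case estimate
\[
\binom{2k}{k}\ge \frac{4^k}{2\sqrt k},
\]
which is $\frac{4^k}{\sqrt{4k}}$ and hence stronger than the target $\frac{2^{2k}}{2\sqrt{2k}}$. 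Alternatively I could prove $\binom{2k}{k}\ge \frac{4^k}{\sqrt{4k}}$ directly. Then for $m=2k+1$,
\[
\binom{2k+1}{k}=\tfrac12\binom{2k+2}{k+1}\ge \frac{4^{k+1}}{2\sqrt{4k+4}}=\frac{2^{m}}{\sqrt{m+1}}\ge\frac{2^m}{2\sqrt m},
\]
where the last step uses $2\sqrt m\ge\sqrt{m+1}$.

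\emph{Even case.} To prove $a_k:=\binom{2k}{k}\sqrt{4k}/4^k\ge 1$, I use induction on $k$.
\begin{itemize}
\item Base case: $a_1=2\cdot 2/4=1$.
\item Inductive step: the ratio is
\[
\frac{a_{k+1}}{a_k}=\frac{(2k+1)(2k+2)}{(k+1)^2\cdot 4}\sqrt{\frac{k+1}{k}}=\frac{2k+1}{2(k+1)}\sqrt{\frac{k+1}{k}}.
\]
Squaring, this ratio is at least $1$ if and only if $(2k+1)^2\ge 4k(k+1)$, i.e.\ $4k^2+4k+1\ge 4k^2+4k$, which is true.
\end{itemize}
So $a_k$ is nondecreasing and $a_k\ge1$ for all $k$.

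The only real obstacle is the parity mismatch in the odd case. It disappears once the even-case inequality is proved in the stronger form with $\sqrt{4k}$ in place of $2\sqrt{2k}$, leaving a factor $\sqrt2$ of slack.
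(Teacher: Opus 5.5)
Your proof is correct, and it takes a different route from the paper, which only appeals to Stirling's formula and omits the details.

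There is one arithmetic slip in the odd case. The correct computation is $\frac{4^{k+1}}{2\sqrt{4k+4}}=\frac{2^{m+1}}{2\sqrt{2(m+1)}}=\frac{2^m}{\sqrt{2m+2}}$, not $\frac{2^m}{\sqrt{m+1}}$. Your stated intermediate bound is false at $m=1$: it would assert $\binom{1}{0}\ge\sqrt2$. The conclusion still holds, because the last comparison becomes $2\sqrt m\ge\sqrt{2m+2}$, i.e.\ $m\ge1$, with equality at $m=1$. So the factor $\sqrt2$ of slack you build into the even case is used up exactly at $m=1$, where the lemma itself is tight.

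As for the two routes, Stirling gives the sharp constant, $\binom{2k}{k}\sim 4^k/\sqrt{\pi k}$. But an asymptotic does not by itself give an inequality valid for \emph{every} $m\ge1$, least of all in the tight case $m=1$. Making it rigorous needs explicit error bounds (e.g.\ Robbins' two-sided bounds on $n!$) plus some small-case bookkeeping, which the paper leaves unstated. Your argument shows that $a_k=\binom{2k}{k}\sqrt{4k}/4^k$ is nondecreasing, a Wallis-type monotonicity argument. It is fully elementary and self-contained, and it handles all $m$ uniformly. It loses only in the constant ($1/2$ rather than $1/\sqrt\pi$ in front of $4^k/\sqrt k$), which does not matter for the later use in Corollary~\ref{cor:choose-m-K}.
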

\begin{proof}
This follows from the standard estimate for the central binomial coefficient derived from Stirling's formula. We omit the details.
\end{proof}

\begin{corollary}\label{cor:choose-m-K}
Let $K\ge4$ be an integer and let $m$ be the least integer such that
\[
\binom{m}{\lfloor m/2\rfloor}\ge K.
\]
Then
\begin{equation}\label{eq:choose-m-K}
m \le \Bigl\lceil \log_2 K + \frac12\log_2\log_2 K + 2\Bigr\rceil.
\end{equation}
\end{corollary}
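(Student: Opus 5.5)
The plan is to exhibit one integer $m_1$ equal to the right-hand side of \eqref{eq:choose-m-K} and verify $\binom{m_1}{\lfloor m_1/2\rfloor}\ge K$ using Lemma~\ref{lem:central-binomial-sqrt}. Since $m$ is the least integer with this property, this gives $m\le m_1$. Concretely, set
\[
x:=\log_2 K+\tfrac12\log_2\log_2 K+2,\qquad m_1:=\lceil x\rceil .
\]
Because $K\ge4$, we have $\log_2 K\ge 2$, so $x>0$ and $m_1\ge1$, and Lemma~\ref{lem:central-binomial-sqrt} applies. It yields
\[
\binom{m_1}{\lfloor m_1/2\rfloor}\ge \frac{2^{m_1}}{2\sqrt{m_1}},
\]
so it suffices to prove $2^{m_1}\ge 2K\sqrt{m_1}$.

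The one subtlety is that $\sqrt{m_1}$ increases with $m_1$. I would handle this through the function $f(y)=y-\tfrac12\log_2 y$. Its derivative is $1-\frac{1}{2y\ln 2}$, which is positive for $y\ge1$, so $f$ is increasing on $[1,\infty)$. The target inequality is $f(m_1)\ge 1+\log_2 K$, and since $m_1\ge x\ge1$ it is enough to show $f(x)\ge 1+\log_2 K$.

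Write $L=\log_2 K\ge2$. Then $x=L+\tfrac12\log_2 L+2$, and the required inequality $f(x)\ge 1+L$ becomes
\[
\tfrac12\log_2 L+1\ \ge\ \tfrac12\log_2 x,
\qquad\text{i.e.}\qquad 4L\ge x = L+\tfrac12\log_2 L+2 .
\]
This amounts to $3L\ge \tfrac12\log_2 L+2$. It holds for $L\ge2$: at $L=2$ the left side is $6$ and the right side is $2.5$, and the left side grows faster thereafter. Hence $2^{m_1}\ge 2K\sqrt{m_1}$, so $\binom{m_1}{\lfloor m_1/2\rfloor}\ge K$ and $m\le m_1$.

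The only step that needs care is the monotonicity reduction from $m_1$ to $x$, which is the main obstacle. Everything else is elementary, and the constant $2$ in \eqref{eq:choose-m-K} leaves ample slack.
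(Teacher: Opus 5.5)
Your proof is correct and follows the same route as the paper: apply Lemma~\ref{lem:central-binomial-sqrt} at $m_1=\lceil x\rceil$ and conclude $m\le m_1$ by minimality. The only difference is how the factor $\sqrt{m_1}$ is controlled. The paper bounds $m_0\le 2\log_2 K$, which needs $K\ge 16$, and handles $4\le K\le 15$ by a direct check. Your monotonicity argument for $y-\tfrac12\log_2 y$ instead reduces everything to $3L\ge \tfrac12\log_2 L+2$, which holds for all $L\ge 2$, so it covers every $K\ge4$ uniformly with no finite check.
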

\begin{proof}
Set \(m_0:=\Bigl\lceil \log_2 K + \frac12\log_2\log_2 K + 2\Bigr\rceil\). By Lemma~\ref{lem:central-binomial-sqrt},
\[
\binom{m_0}{\lfloor m_0/2\rfloor}\ \ge\ \frac{2^{m_0}}{2\sqrt{m_0}}.
\]
Moreover $2^{m_0}\ge 2^{\,\log_2 K+\frac12\log_2\log_2 K+2}=4K\sqrt{\log_2 K}$, so
\[
\frac{2^{m_0}}{2\sqrt{m_0}}
\ge 2K\sqrt{\frac{\log_2 K}{m_0}}.
\]
If $K\ge16$ then $m_0\le 2\log_2 K$, so the right-hand side is at least $\sqrt2\,K\ge K$.
For $4\le K\le 15$ the bound \eqref{eq:choose-m-K} is verified by a direct check.
Thus $\binom{m_0}{\lfloor m_0/2\rfloor}\ge K$, and by minimality of $m$ we have $m\le m_0$, proving \eqref{eq:choose-m-K}.
\end{proof}

We will use the following elementary classification of pairwise-intersecting families of $2$-subsets.

\begin{lemma}\label{lem:pairwise-intersecting-2sets}
Let $X$ be a finite set and let $\cE\subseteq \binom{X}{2}$ be a family of $2$-subsets
such that any two members of $\cE$ intersect. Then either
\begin{enumerate}[label=(\roman*)]
\item $\cE$ is a \emph{star}, i.e.\ there exists $a\in X$ such that $a\in E$ for every $E\in\cE$, or
\item $\cE$ is a \emph{triangle}, i.e.\ there exist distinct $a,b,c\in X$ such that
$\cE=\bigl\{\{a,b\},\{a,c\},\{b,c\}\bigr\}$.
\end{enumerate}
\end{lemma}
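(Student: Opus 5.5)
We prove Lemma~\ref{lem:pairwise-intersecting-2sets} by a short case analysis on the structure of $\cE$; no earlier results are needed. If $|\cE|\le 1$, or more generally if all members of $\cE$ share a common element, then $\cE$ is a star and we are in case (i). So assume $\cE$ is not a star. In particular $\cE$ contains at least two edges, and since they intersect and are distinct, we may write two of them as $\{a,b\}$ and $\{a,c\}$ with $a,b,c$ distinct.

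Because $\cE$ is not a star centred at $a$, there is some $E\in\cE$ with $a\notin E$. This $E$ must meet $\{a,b\}$ and $\{a,c\}$ while avoiding $a$, so $b\in E$ and $c\in E$. Since $|E|=2$, this forces $E=\{b,c\}$. Hence $\cE\supseteq\bigl\{\{a,b\},\{a,c\},\{b,c\}\bigr\}$.

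It remains to show that $\cE$ has no further member. Suppose $F\in\cE$ is a fourth edge. Then $F$ must meet each of $\{a,b\}$, $\{a,c\}$ and $\{b,c\}$. A $2$-set contained in $\{a,b,c\}$ is one of these three edges, so $F\not\subseteq\{a,b,c\}$. Thus $F$ has at most one element in $\{a,b,c\}$, say $x$. But a single element $x$ lies in only two of the three pairs, and the third pair avoids $x$, so $F$ misses that pair. This contradicts the intersecting property. Therefore $\cE$ is exactly the triangle, which is case (ii).

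The argument is elementary throughout. The only point needing care is this final step, which excludes extra edges once the triangle has been found; it reduces to the observation that no single vertex covers all three pairs.
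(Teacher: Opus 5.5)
Your proof is correct and follows essentially the same route as the paper. You fix two edges $\{a,b\},\{a,c\}$, use an edge avoiding $a$ to force $\{b,c\}\in\cE$, and then show that no further edge exists. The only cosmetic difference is in that last step: the paper splits on whether $a\in F$, while you observe that a $2$-set not contained in $\{a,b,c\}$ meets it in at most one vertex, and one vertex cannot cover all three pairs.
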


\begin{proof}
If $|\cE|\le 2$ then (i) holds trivially.
Assume $|\cE|\ge 3$ and pick two distinct edges $\{a,b\},\{a,c\}\in\cE$.
If every edge in $\cE$ contains $a$, then (i) holds and we are done.
Otherwise, there exists $E\in\cE$ with $a\notin E$.
Since $E$ intersects both $\{a,b\}$ and $\{a,c\}$, we must have $b,c\in E$,
so $E=\{b,c\}$. Now let $F\in\cE$ be arbitrary. Then $F$ intersects each of
$\{a,b\},\{a,c\},\{b,c\}$. If $a\in F$, then $F$ must contain $b$ or $c$,
hence $F\in\bigl\{\{a,b\},\{a,c\}\bigr\}$.
If $a\notin F$, then $F$ must intersect both $\{a,b\}$ and $\{a,c\}$, so again
$F=\{b,c\}$. Therefore $\cE=\bigl\{\{a,b\},\{a,c\},\{b,c\}\bigr\}$ and (ii) holds.
\end{proof}

We will also use the following simple ``label'' antichain gadget.

\begin{lemma}\label{lem:labels-general}
Let $m\ge 4$ and set $\ell:=\lfloor m/2\rfloor$.
Let $V_0$ be a set of size $m$ and let $b$ be an element not in $V_0$.
Put $V:=V_0\cup\{b\}$.
Let $K$ be an integer such that
\begin{equation}\label{eq:labels-general-K}
K\ \ge\ \ell+1
\qquad\text{and}\qquad
K-(\ell+1)\ \le\ \binom{m}{\ell}.
\end{equation}
Then there exist sets $L_t\subseteq V$ for all integers $t\in\{4,5,\dots,K\}$ such that
\begin{enumerate}[label=(\roman*)]
\item $|L_t|=2$ for every $t\in\{4,5,\dots,\ell+1\}$, and these $L_t$ are pairwise distinct and all contain $b$;
\item $|L_t|=\ell$ for every $t\in\{\ell+2,\ell+3,\dots,K\}$, and these $L_t$ are pairwise distinct and all lie in $V_0$;
\item the family $\{L_t:4\le t\le K\}$ is an antichain under inclusion.
\end{enumerate}
\end{lemma}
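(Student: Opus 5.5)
We give an explicit construction and then check the antichain property. Write $V_0=\{v_1,\dots,v_m\}$.

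\emph{Two-element labels.} For $t\in\{4,\dots,\ell+1\}$ put
\[
L_t:=\{b,v_{t-3}\}.
\]
There are $\ell-2$ such indices, and $\ell-2\le m$, so the elements $v_1,\dots,v_{\ell-2}$ exist and are distinct. Hence these sets are pairwise distinct 2-sets containing $b$, which gives (i). If $\ell+1<4$ this range is empty; since $m\ge4$ we have $\ell\ge2$, so this only happens when $\ell=2$, and then the range is empty anyway.

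\emph{$\ell$-element labels.} The indices $t\in\{\ell+2,\dots,K\}$ number $K-\ell-1$. By \eqref{eq:labels-general-K} this count is nonnegative and at most $\binom{m}{\ell}$. We want to assign to them pairwise distinct $\ell$-subsets of $V_0$ that avoid $v_1,\dots,v_{\ell-2}$ where possible. So we choose these subsets with a priority ordering: first take the $\ell$-subsets of $V_0$ that are disjoint from $P:=\{v_1,\dots,v_{\ell-2}\}$, and only afterwards use the others. Every such set lies in $V_0$, which gives (ii).

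\emph{Antichain property (iii).} Two sets of the same size are incomparable as soon as they are distinct. A set of size $\ell$ lies in $V_0$ and so does not contain $b$. It therefore cannot contain any 2-label, since every 2-label contains $b$. It also cannot lie inside a 2-label when $\ell\ge3$, by size. The one remaining case is $\ell=2$. There the 2-labels are absent, as noted above, so all labels have size $\ell$ and distinctness alone suffices.

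The only step that needs care is confirming that the two families never interact. This rests on $b\notin V_0$ together with the size comparison, so the priority ordering in the choice of $\ell$-sets is not actually needed. Any $K-\ell-1$ distinct $\ell$-subsets of $V_0$ will do.
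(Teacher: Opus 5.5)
Your proof is correct and matches the paper's construction: the same 2-labels $\{b,v_{t-3}\}$, an arbitrary choice of distinct $\ell$-subsets of $V_0$, and incomparability across the two subfamilies because $b\notin V_0$. You also handle the reverse inclusion explicitly (by size when $\ell\ge3$, and vacuously when $\ell=2$), which the paper leaves implicit. As you note yourself, the priority ordering on the $\ell$-sets is unnecessary and can be dropped.
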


\begin{proof}
Since $\ell=\lfloor m/2\rfloor$, we have $\ell-2\le m$.
Choose distinct $v_1,\dots,v_{\ell-2}\in V_0$ (possible because $m\ge4$ implies $\ell\ge2$), and define
\[
L_t:=\{b,v_{t-3}\}\qquad (4\le t\le \ell+1).
\]
For $t\ge \ell+2$, we need $K-(\ell+1)$ many $\ell$-subsets of $V_0$.
By~\eqref{eq:labels-general-K}, we may choose distinct
\[
L_t\in \binom{V_0}{\ell}\qquad (\ell+2\le t\le K).
\]
Finally, inclusion cannot occur between distinct $L_t$:
within each of the two subfamilies all sets have the same size, so inclusion forces equality;
and no $L_t$ with $t\le \ell+1$ can be contained in an $L_s$ with $s\ge \ell+2$ because the former contains $b$
while the latter does not. This proves~(iii).
\end{proof}

Finally, we record a universal obstruction: for $r\ge2$ one cannot realize $n-2$ distinct levels. This fact is taken for granted in the statement of Problem~\ref{prob:EP776}; we include a complete proof here for completeness.

\begin{lemma}\label{lem:no-n-2}
Let $r\ge 2$ and $n\ge 4$. Then every $r$-multiplicity antichain $\cF\subseteq 2^{[n]}$ satisfies \(|S(\cF)|\ \le\ n-3\). Equivalently, for all $r\ge2$ and $n\ge4$, $$g(n,r)\le n-3.$$
\end{lemma}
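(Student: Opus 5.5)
The plan is a direct counting argument: first rule out the extreme levels $0$ and $n$, then show that using level $1$ (or, by complementation, level $n-1$) costs at least as many levels as it gains.

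\emph{Step 1: the extreme levels are excluded.} Since $r\ge 2$, the levels $0$ and $n$ cannot occur in $S(\cF)$: each contains only one subset of $[n]$ (namely $\emptyset$ and $[n]$). Hence $S(\cF)\subseteq\{1,\dots,n-1\}$. If neither $1$ nor $n-1$ lies in $S(\cF)$, then $S(\cF)\subseteq\{2,\dots,n-2\}$. This set has $n-3$ elements, so we are done in that case.

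\emph{Step 2: the case $1\in S(\cF)$.} Let $X$ be the set of elements $x$ with $\{x\}\in\cF$, and put $k:=|X|$. By $r$-multiplicity, $k\ge r\ge 2$. By the antichain property, no other member of $\cF$ meets $X$, since it would contain some singleton $\{x\}$ of $\cF$. So every member of $\cF$ of size at least $2$ is a subset of $Y:=[n]\setminus X$, and $|Y|=n-k\le n-2$.

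Among these members, each occurring size $t$ must carry at least $r\ge 2$ distinct subsets of $Y$. This rules out $t=|Y|$, because $Y$ itself is the only subset of that size. Therefore the occurring sizes other than $1$ lie in $\{2,\dots,n-k-1\}$, which has at most $n-k-2$ elements (it may be empty). Adding the level $1$ back gives
\[
|S(\cF)|\le 1+(n-k-2)=n-k-1\le n-3 .
\]

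\emph{Step 3: the case $n-1\in S(\cF)$.} This follows from Step 2 by complementation. The family $\cF^c:=\{[n]\setminus A: A\in\cF\}$ is again an antichain, since complementation reverses inclusion. Level $t$ of $\cF$ corresponds bijectively to level $n-t$ of $\cF^c$, so $\cF^c$ is an $r$-multiplicity antichain with $|S(\cF^c)|=|S(\cF)|$ and $1\in S(\cF^c)$. Applying Step 2 to $\cF^c$ gives $|S(\cF)|\le n-3$.

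If both $1$ and $n-1$ occur, Step 2 already applies; in fact this cannot happen, since sets of size $n-1>|Y|$ cannot lie in $Y$. The only step needing care is Step 2. The point is that the singletons lying in $\cF$ force all the other sets into a ground set smaller by $k\ge 2$ elements, and the top level of that smaller ground set is unusable, so using level $1$ never pays off. The hypothesis $n\ge 4$ only ensures that the bound $n-3$ is positive, so the statement is meaningful.
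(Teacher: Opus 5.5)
Your strategy is sound and rests on the same idea as the paper's proof: the singletons in $\cF$ confine every other member to $[n]\setminus X$, and the top level there cannot carry $r\ge2$ sets. You organize it as a direct count rather than as a contradiction built on the forced level $n-2$. You also handle $n-1\in S(\cF)$ by complementation instead of the paper's separate Case~2.

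There is, however, a false step in Step~2. The interval $\{2,\dots,n-k-1\}$ has exactly $\max\{0,\,n-k-2\}$ elements. Your ``at most $n-k-2$'' fails when $k\ge n-1$, where $n-k-2<0$. So the inequality $|S(\cF)|\le n-k-1$ is wrong in that range. For instance, if $\cF$ consists of all $n$ singletons, then $|S(\cF)|=1$ while $n-k-1=-1$. The correct conclusion of Step~2 is
\[
|S(\cF)|\ \le\ 1+\max\{0,\,n-k-2\}\ =\ \max\{1,\,n-k-1\}.
\]
Bounding this by $n-3$ uses $k\ge2$ for the term $n-k-1$ and $n\ge4$ for the term $1$.

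Your closing remark is therefore mistaken: $n\ge4$ is not a mere nondegeneracy condition. The lemma is false for $n=3$. The family $\{\{1\},\{2\}\}$ is a $2$-multiplicity antichain on $[3]$ with one level, so $g(3,2)=1>0$ (Proposition~\ref{prop:g32}). Read literally, your Step~2 would give $|S(\cF)|\le 0$ for this family. The miscounted case $k\ge n-1$ is exactly where the hypothesis $n\ge4$ has to enter. With the $\max$ inserted, your proof is complete, and when level $1$ is used it gives the slightly more informative bound $\max\{1,n-k-1\}$.
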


\begin{proof}
Fix $r\ge2$ and $n\ge4$, and let $\cF\subseteq 2^{[n]}$ be an $r$-multiplicity antichain.

Since $r\ge2$, the levels $t=0$ and $t=n$ cannot occur: indeed $|\cF_0|\le1$ and $|\cF_n|\le1$,
contradicting the $r$-multiplicity condition whenever $0$ or $n$ lies in $S(\cF)$.
Hence
\[
S(\cF)\subseteq \{1,2,\dots,n-1\}.
\]

\begin{claim}\label{claim:2.4.1}
If $1\in S(\cF)$ then $n-1\notin S(\cF)$.
\end{claim}
\begin{proof}[Proof of Claim~\ref{claim:2.4.1}]
Indeed, since $1\in S(\cF)$ we have $|\cF_1|\ge r\ge2$, so choose two distinct singletons $\{x\},\{y\}\in\cF_1$.
Any $(n-1)$-subset of $[n]$ misses exactly one element, hence contains at least one of $x,y$.
Therefore every $(n-1)$-set $H$ satisfies $\{x\}\subseteq H$ or $\{y\}\subseteq H$, contradicting the antichain property.
So $\cF_{n-1}=\varnothing$, i.e.\ $n-1\notin S(\cF)$.
\end{proof}

\begin{claim}\label{claim:2.4.2}
If $n-1\in S(\cF)$ then $1\notin S(\cF)$.
\end{claim}
\begin{proof}[Proof of Claim~\ref{claim:2.4.2}]
Indeed, if $n-1\in S(\cF)$ then $|\cF_{n-1}|\ge r\ge2$, so pick two distinct $(n-1)$-sets
$H_1=[n]\setminus\{x_1\}$ and $H_2=[n]\setminus\{x_2\}$ with $x_1\ne x_2$.
For any singleton $\{z\}$, at least one of $H_1,H_2$ contains $z$ (since $z$ cannot equal both $x_1$ and $x_2$),
so $\{z\}\subseteq H_i$ for some $i\in\{1,2\}$, contradicting the antichain property.
Hence $\cF_1=\varnothing$, i.e.\ $1\notin S(\cF)$.
\end{proof}

Suppose for contradiction that $|S(\cF)|\ge n-2$.
Then by Claims~\ref{claim:2.4.1} and~\ref{claim:2.4.2} necessarily $|S(\cF)|=n-2$ and there exists a unique $t_0\in\{1,\dots,n-1\}$ such that
\[
S(\cF)=\{1,2,\dots,n-1\}\setminus\{t_0\}.
\]

Now we split into cases.

\smallskip
\noindent\underline{\emph{Case 1: $1\in S(\cF)$.}}
By Claim~\ref{claim:2.4.1} we have $n-1\notin S(\cF)$, so the unique missing level is $t_0=n-1$ and in particular $n-2\in S(\cF)$.

Choose $r$ distinct singletons $\{x_1\},\dots,\{x_r\}\in\cF_1$ and set $X=\{x_1,\dots,x_r\}$.
If $A\in\cF$ has $|A|\ge2$ then $A$ cannot contain any $x_i$, otherwise $\{x_i\}\subseteq A$ contradicts the antichain property.
Hence every $A\in\cF$ with $|A|\ge2$ satisfies $A\subseteq [n]\setminus X$, where $|[n]\setminus X|=n-r$.
In particular, every $(n-2)$-set in $\cF_{n-2}$ must be an $(n-2)$-subset of an $(n-r)$-set, so
\[
|\cF_{n-2}|\ \le\ \binom{n-r}{n-2}.
\]
If $r\ge3$ then $\binom{n-r}{n-2}=0$, and if $r=2$ then $\binom{n-r}{n-2}=\binom{n-2}{n-2}=1$.
In either case we get $|\cF_{n-2}|<r$, contradicting $n-2\in S(\cF)$ and $r$-multiplicity.

\smallskip
\noindent\underline{\emph{Case 2: $n-1\in S(\cF)$.}}
By Claim~\ref{claim:2.4.2} we have $1\notin S(\cF)$, so the unique missing level is $t_0=1$ and in particular $2\in S(\cF)$.

Choose $r$ distinct $(n-1)$-sets $H_i\in\cF_{n-1}$, say $H_i=[n]\setminus\{x_i\}$ with distinct $x_1,\dots,x_r$,
and set $X=\{x_1,\dots,x_r\}$.
Let $A\in\cF$ with $|A|\le n-2$. For each $i$, we must have $A\not\subseteq H_i$ by the antichain property.
But if $x_i\notin A$ then $A\subseteq [n]\setminus\{x_i\}=H_i$, a contradiction. Hence $x_i\in A$ for every $i$,
so $X\subseteq A$ and therefore $|A|\ge r$.

In particular, if $r\ge3$ then $\cF_2=\varnothing$, contradicting $2\in S(\cF)$.
If $r=2$ then every $2$-set in $\cF_2$ must equal $X=\{x_1,x_2\}$, so $|\cF_2|\le1<r$, again contradicting $r$-multiplicity.
This finishes the contradiction.

\smallskip
Therefore $|S(\cF)|\le n-3$ for every $r$-multiplicity antichain $\cF$, i.e.\ $g(n,r)\le n-3$.
\end{proof}

\section{Lower Bound}\label{sec:main-lower-b}

\begin{proposition}\label{prop:lower-2r+2}
Let $r\ge 4$ and let $n$ satisfy $r+3\le n\le 2r+2$.
Then every $r$-multiplicity antichain $\cF\subseteq 2^{[n]}$ satisfies
\[
|S(\cF)|\le n-4.
\]
\end{proposition}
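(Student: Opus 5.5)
The plan is to argue by contradiction: assume $|S(\cF)|=n-3$ and derive a contradiction from the upper bound $n\le 2r+2$.

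\textbf{Step 1 (the end levels $1$ and $n-1$).} By the proof of Lemma~\ref{lem:no-n-2}, $S(\cF)\subseteq\{1,\dots,n-1\}$, and $S(\cF)$ cannot contain both $1$ and $n-1$. Since $|S(\cF)|=n-3$, exactly two levels of $\{1,\dots,n-1\}$ are missing.

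First suppose $1\in S(\cF)$. Then $n-1$ is one of the missing levels. As in Case~1 of Lemma~\ref{lem:no-n-2}, every other member of $\cF$ lies in $[n]\setminus X$ with $|X|\ge r$. Hence no level in $(n-r,\,n-2]$ can occur. This interval contains $r-2\ge2$ levels, so at least three levels are missing in total, a contradiction.

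Now suppose $n-1\in S(\cF)$. Replacing $\cF$ by $\{[n]\setminus A: A\in\cF\}$ gives an $r$-multiplicity antichain with levels $t\mapsto n-t$, which reduces to the previous case. Therefore $S(\cF)=\{2,3,\dots,n-2\}$.

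\textbf{Step 2 (structure of levels $2$ and $n-2$).} Let $G$ be the graph of the ($\ge r$) $2$-sets in $\cF_2$. Let $H$ be the graph of the complements of the ($\ge r$) sets in $\cF_{n-2}$. For $e\in G$ and $B=[n]\setminus f$ with $f\in H$, the antichain condition $e\not\subseteq B$ says exactly that $e\cap f\ne\varnothing$. So every edge of $H$ is a vertex cover of $G$, and every edge of $G$ is a vertex cover of $H$.

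Since $|G|,|H|\ge r\ge4$, Lemma~\ref{lem:pairwise-intersecting-2sets}-type reasoning applies. A graph with at least $4$ edges covered by a single $2$-set $\{u,v\}$ has all its edges through $u$ or $v$. I would run a short case analysis on these covers. The goal is a vertex $a$ with $a$ in every edge of $G$ and in every edge of $H$. The alternatives, where covers use two vertices $u,v$ with both $G$ and $H$ inside the edges at $u,v$, either force fewer than $r$ edges or yield an analogous pair structure to be handled the same way.

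So in the main case $G=\{\{a,x_i\}\}$ and $H=\{\{a,y_j\}\}$, with $|X|,|Y|\ge r$. Any $A\in\cF$ of a middle size then satisfies the following. If $a\in A$, then $A\cap X=\varnothing$, so $|A|\le n-r$. If $a\notin A$, then $Y\subseteq A$, so $|A|\ge r$.

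\textbf{Step 3 (counting, the expected obstacle).} The levels $3,\dots,r-1$ are realized only by sets containing $a$ and avoiding $X$. These are $a$ together with subsets of $W:=[n]\setminus(X\cup\{a\})$, where $|W|\le n-r-1\le r+1$. Dually, the levels $n-r+1,\dots,n-3$ are realized only by sets omitting $a$ and containing $Y$.

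Removing $a$, the $a$-containing part gives an antichain on $W$ with $r$ sets on each of the consecutive levels $2,\dots,r-2$. This family is on a ground set of size at most $r+1$, and it is complemented by the symmetric family on the $Y$ side. The middle levels in $[r,\,n-r]$, of which there are at most $n-2r+1\le 3$, must be shared between the two sides.

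I expect the crux to be squeezing a contradiction out of this. The first thing I would try is to apply Lemma~\ref{lem:no-n-2}, in its sharpened Step-1 form, to the link family on $W$, which has too many levels relative to $|W|$. The relevant inequality is $r-3>|W|-3$ once $|W|\le r-1$. The remaining boundary cases $|W|\in\{r,r+1\}$ (i.e.\ $n\in\{2r+1,2r+2\}$) need a direct check that level $2$ or level $|W|-1$ of $W$ cannot carry $r$ sets together with the neighbouring levels. That check is where the hypothesis $n\le 2r+2$ should be sharp.
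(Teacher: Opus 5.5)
Steps~1 and~2 are sound and follow the paper's opening. In Step~2, though, you should drop the vague ``analogous pair structure'' alternative and give the actual argument. Two disjoint edges of $G$ confine $H$ to the four edges of a $K_{2,2}$, which is impossible for $r\ge5$. For $r=4$ they force $H$ to be that $4$-cycle, whose only $2$-element vertex covers are its two sides, so $|G|\le 2$. Hence $G$ is a star at some $a$, and every edge of $H$ contains $a$, since it must meet at least four edges $\{a,x\}$. Your case $|W|\le r-1$ is also fine; directly, $\binom{|W|}{r-2}\le r-1<r$.

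\textbf{The gap.} The proof stops exactly at the cases $|W|\in\{r,r+1\}$, which carry all the difficulty. The link family on $W$ coming from levels $3,\dots,r-1$ cannot by itself give a contradiction there. For $|W|=r+1$ it has only $r-3=|W|-4$ levels. For instance, with $r=5$, five edges through one point of a $6$-set together with five $3$-subsets of the other five points form an antichain with five sets on each of levels $2$ and $3$. For $r=4$ the link has a single level. So the contradiction has to come from the middle levels and the $Y$-side, which you mention but never use.

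\textbf{What is missing.} Your comparison of link levels $2$ and $|W|-1$ does show that at most one member of $\cF_{|W|}$ contains $a$. Such a set is $\{a\}\cup(W\setminus\{w\})$, and it contains no member of $\cF_3$ only if $w$ lies in every one of the $\ge r$ link edges.
\begin{itemize}
\item For $|W|=r$ this finishes once you add one observation: an $a$-avoiding member of $\cF_r$ contains $Y$, hence equals $Y$. So $|\cF_r|\le 2<r$. This is a slightly quicker route than the paper's Case~1, which compares levels $r$ and $r-1$.
\item For $|W|=r+1$ (that is, $n=2r+2$ and $|X|=r$) it does not finish, because $\cF_{r+1}$ may be filled by $a$-avoiding sets $Y\cup\{c\}$.
\end{itemize}
The missing idea is the paper's two-level argument at $r+1$ and $r+2$:
\begin{itemize}
\item The at least $r-1$ distinct $a$-avoiding members of $\cF_{r+1}$ all contain $Y$. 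This forces $|Y|=r$, so they are $Y\cup\{c_i\}$ with distinct $c_1,\dots,c_{r-1}$ in the $(r+1)$-set $[n]\setminus(\{a\}\cup Y)$.
\item Every member of $\cF_{r+2}$ avoids $a$: the only $a$-containing candidate is $\{a\}\cup W$, which contains all of $\cF_3$. It therefore contains $Y$ and has the form $Y\cup\{b,b'\}$ with $b,b'\notin\{c_1,\dots,c_{r-1}\}$.
\item At most two elements remain for $b,b'$, so $|\cF_{r+2}|\le1<r$.
\end{itemize}
Without this, your proposal does not handle $n=2r+2$, which is exactly the case needed for Theorem~\ref{thm:main_lower_bound}.
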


\begin{proof}
Assume for contradiction that there exists an $r$-multiplicity antichain $\cF\subseteq 2^{[n]}$ with \(|S(\cF)|\ge n-3\). By Lemma~\ref{lem:no-n-2} every $r$-multiplicity antichain satisfies $|S(\cF)|\le n-3$.
Therefore $\cF$ must in fact satisfy
\[
|S(\cF)|=n-3.
\]Since there are $n+1$ possible sizes in $\{0,1,\dots,n\}$, the equality $|S(\cF)|=n-3$ means that exactly four sizes are missing.

Since $r\ge 4$, the levels $0$ and $n$ cannot occur in $S(\cF)$ (each contains only one set).
If $1\in S(\cF)$, then $|\cF_1|\ge r$ and hence $\cF_1$ contains $r$ distinct singletons
$\{x_1\},\dots,\{x_r\}$. By the antichain property, no set in $\cF$ may contain any $x_i$; thus every
$A\in\cF$ is contained in $[n]\setminus\{x_1,\dots,x_r\}$ and so $|A|\le n-r$.
In particular, none of the sizes $n-r+1,\dots,n$ occurs. Together with $0\notin S(\cF)$, this forces at least
$r+1\ge 5$ missing sizes, contradicting that exactly four sizes are missing.
Hence $1\notin S(\cF)$.

Now consider the complement family $\cF^c:=\{[n]\setminus A:\ A\in\cF\}$.
It is again an $r$-multiplicity antichain, and $S(\cF^c)=\{n-t:\ t\in S(\cF)\}$. Since $|S(\cF^c)|=|S(\cF)|=n-3$, the same argument shows that $1\notin S(\cF^c)$. Thus $n-1\notin S(\cF)$. Therefore the missing sizes are precisely $\{0,1,n-1,n\}$ and hence
\begin{equation}\label{eq:levels-2-to-n-2-2r+2}
S(\cF)=\{2,3,\dots,n-2\}.
\end{equation}

Let
\[
\cM:=\cF_{n-2}^c=\{\, [n]\setminus A:\ A\in\cF_{n-2}\,\}\subseteq \binom{[n]}{2}.
\]
Then $|\cF_2|\ge r$ and $|\cM|\ge r$ by $r$-multiplicity and \eqref{eq:levels-2-to-n-2-2r+2}.
Moreover, for all $E\in\cF_2$ and $M\in\cM$ we have
\begin{equation}\label{eq:cross-int-2r+2}
E\cap M\neq\varnothing,
\end{equation}
since otherwise $E\subseteq [n]\setminus M\in\cF_{n-2}$ would contradict that $\cF$ is an antichain.

We claim that $\cF_2$ is pairwise intersecting.
Indeed, if $\cF_2$ contained two disjoint edges $\{a,b\}$ and $\{c,d\}$, then any $M\in\cM$ must intersect both,
so necessarily
\[
\cM\subseteq \bigl\{\{a,c\},\{a,d\},\{b,c\},\{b,d\}\bigr\},
\]
whence $|\cM|\le 4$.
If $r>4$ this contradicts $|\cM|\ge r$.
If $r=4$, then $|\cM|\ge 4$ forces equality, so
$\cM=\{\{a,c\},\{a,d\},\{b,c\},\{b,d\}\}$.
But then the only $2$-sets intersecting all members of $\cM$ are $\{a,b\}$ and $\{c,d\}$, so $|\cF_2|\le 2$,
contradicting $|\cF_2|\ge r=4$.
This proves that $\cF_2$ is pairwise intersecting.

We claim that $\cF_2$ is a star, and all $(n-2)$-sets avoid its centre. Indeed, by Lemma~\ref{lem:pairwise-intersecting-2sets}, a pairwise intersecting family of $2$-sets is either a star or a triangle. Since $|\cF_2|\ge r\ge 4$,
it cannot be a triangle. Hence there exists a point $x\in[n]$ and a set $P\subseteq [n]\setminus\{x\}$ with
$|P|=|\cF_2|\ge r$ such that
\begin{equation}\label{eq:star-2r+2}
\cF_2=\{\{x,p\}:\ p\in P\}.
\end{equation}
Next we show that every $M\in\cM$ contains $x$.
Fix $M\in\cM$. By \eqref{eq:cross-int-2r+2} and \eqref{eq:star-2r+2}, $M$ must intersect $\{x,p\}$ for every $p\in P$.
If $x\notin M$, then $M$ is a $2$-set and must contain every $p\in P$, impossible since $|P|\ge r\ge 4>2$.
Therefore $x\in M$ for all $M\in\cM$. Writing $M=\{x,q\}$, we obtain a set
\[
Q:=\{\,q\in[n]\setminus\{x\}:\ \{x,q\}\in\cM\,\}
\quad\text{with}\quad |Q|=|\cM|\ge r,
\]
and equivalently every $A\in\cF_{n-2}$ has the form $A=[n]\setminus\{x,q\}$ for some $q\in Q$.
In particular,
\[
x\notin A \qquad\text{for all }A\in\cF_{n-2}.
\]

Let $2\le t\le r-1$ and take any $B\in\cF_t$ (which exists by \eqref{eq:levels-2-to-n-2-2r+2}).
Suppose $x\notin B$. Since $|B|<|Q|$, we know that $Q\setminus B\neq\varnothing$. For any $q\in Q\setminus B$ we would have
\[
B\subseteq [n]\setminus\{x,q\}\in\cF_{n-2},
\]
contradicting that $\cF$ is an antichain. This proves
\begin{equation}\label{eq:small-levels-contain-x-fixed}
x\in B\quad\text{for every }B\in\cF_t\ \text{and every }2\le t\le r-1.
\end{equation}

Set
\[
U:=[n]\setminus\bigl(\{x\}\cup P\bigr).
\]
Since $|P|\ge r$, we have
\begin{equation}\label{eq:U-upper}
|U|=n-1-|P|\le n-1-r\le (2r+2)-1-r=r+1.
\end{equation}
Moreover, any set $B\in\cF$ with $x\in B$ and $|B|\ge 3$ cannot contain any $p\in P$,
since otherwise $\{x,p\}\in\cF_2$ would be contained in $B$, contradicting the antichain property.
Thus every $B\in\cF_t$ with $x\in B$ and $t\ge 3$ can be written as
\[
B=\{x\}\cup B',\qquad B'\subseteq U,\qquad |B'|=t-1.
\]

\smallskip
\noindent\underline{\emph{Case 1: $|U|\le r$.}}
By \eqref{eq:levels-2-to-n-2-2r+2} we have $r\in S(\cF)$, so $|\cF_r|\ge r$. 

We claim that there is at most one set $A\in\cF_r$ with $x\notin A$. Indeed, let $A\in\cF_r$ and suppose $x\notin A$.
If there existed $q\in Q\setminus A$, then $A\subseteq [n]\setminus\{x,q\}\in\cF_{n-2}$, contradicting that $\cF$ is an antichain. Hence $Q\subseteq A$, so $r=|A|\ge |Q|$.
On the other hand, $|Q|=|\cM|=|\cF_{n-2}|\ge r$, and thus $|Q|=r$ and necessarily $A=Q$. This proves the claim. 

Thus, we can choose
$r-1$ distinct sets $A_1,\dots,A_{r-1}\in\cF_r$ with $x\in A_i$.
Write $A_i=\{x\}\cup X_i$ where $X_i\subseteq U$ and $|X_i|=r-1$.
Since the $A_i$ are distinct, the sets $X_1,\dots,X_{r-1}$ are distinct.
This forces $|U|\ge r$, and together with $|U|\le r$ we get $|U|=r$. 

Now $|\cF_{r-1}|\ge r$ and by \eqref{eq:small-levels-contain-x-fixed} every set in $\cF_{r-1}$ contains $x$,
so pick $r$ distinct sets $B_1,\dots,B_r\in\cF_{r-1}$ and write
$B_j=\{x\}\cup Y_j$ where $Y_j\subseteq U$ and $|Y_j|=r-2$.
Since $|U|=r$, each $Y_j$ has exactly two supersets of size $r-1$ in $U$.
But $U$ has exactly $\binom{r}{r-1}=r$ many $(r-1)$-subsets, and the $r-1$ distinct sets
$X_1,\dots,X_{r-1}$ miss at most one of them; therefore, for each $j$ at least one of the two
$(r-1)$-supersets of $Y_j$ must equal $X_i$ for some $i$.
Hence for each $j$ there exist $i$ with $Y_j\subseteq X_i$, which implies
\[
B_j=\{x\}\cup Y_j\subseteq \{x\}\cup X_i=A_i,
\]
contradicting that $\cF$ is an antichain.

\smallskip
\noindent\underline{\emph{Case 2: $|U|=r+1$.}}
Then by~\eqref{eq:U-upper} we know that $n=2r+2$. Since $r\ge 4$, we have $3\le r-1$ and thus $\cF_3\neq\varnothing$ and every $3$-set in $\cF$ contains $x$
by \eqref{eq:small-levels-contain-x-fixed}. In particular, every $C\in\cF_3$ has the form
$C=\{x\}\cup e$ with $e\in\binom{U}{2}$.
Define
\[
\cH:=\bigl\{\,e\in\binom{U}{2}:\ \{x\}\cup e\in\cF_3\,\bigr\}.
\]
Then $|\cH|=|\cF_3|\ge r$.

\begin{claim}\label{claim:3.1}
At most one set in $\cF_{r+1}$ contains $x$.   
\end{claim}
\begin{proof}[Proof of Claim~\ref{claim:3.1}]
Indeed, suppose $A\in\cF_{r+1}$ with $x\in A$.
Then $A=\{x\}\cup Z$ with $Z\subseteq U$ and $|Z|=r$.
Since $|U|=r+1$, we can write $Z=U\setminus\{u\}$ for a unique $u\in U$.
If there exists $e\in\cH$ with $e\subseteq Z$, then $\{x\}\cup e\subseteq A$, contradicting the antichain property.
Thus for every $e\in\cH$ we have $e\nsubseteq Z$, which is equivalent to $u\in e$.
Hence $u\in \bigcap_{e\in\cH} e$.
But $\cH$ contains at least two distinct $2$-sets (since $|\cH|\ge r\ge 4$),
so $\bigcap_{e\in\cH} e$ has size at most $1$.
Therefore there is at most one possible $u$, and hence at most one such $A$.
\end{proof}

By Claim~\ref{claim:3.1} and $|\cF_{r+1}|\ge r$, there are at least $r-1$ distinct sets
$\widetilde{A}_1,\dots,\widetilde{A}_{r-1}\in\cF_{r+1}$ with $x\notin \widetilde{A}_i$.
Fix such an $\widetilde{A}_i$. If there exists $q\in Q$ with $q\notin \widetilde{A}_i$, then
$\widetilde{A}_i\subseteq [n]\setminus\{x,q\}\in\cF_{n-2}$ and the inclusion is proper because
$|\widetilde{A}_i|=r+1<(2r+2)-2=n-2$, contradicting that $\cF$ is an antichain.
Hence $Q\subseteq \widetilde{A}_i$ for all $i$, and in particular $|Q|\le r+1$.

Since we have at least $r-1$ distinct sets $\widetilde{A}_i$ of size $r+1$ all containing $Q$, it follows that $|Q|\neq r+1$.
Therefore
\begin{equation}\label{eq:Q-size-r}
|Q|=r.
\end{equation}
Writing $R:=[n]\setminus(\{x\}\cup Q)$, we have $|R|=n-1-|Q|=(2r+2)-1-r=r+1$ and each $\widetilde{A}_i$ is of the form
\[
\widetilde{A}_i=Q\cup\{a_i\}\qquad (a_i\in R),
\]
with $a_1,\dots,a_{r-1}$ pairwise distinct.

\begin{claim}\label{claim:3.2}
Every set in $\cF_{r+2}$ avoids $x$ and contains $Q$.
\end{claim}
\begin{proof}[Proof of Claim~\ref{claim:3.2}]
First, suppose $B\in\cF_{r+2}$ contains $x$.
Then $|B|\ge 3$ and hence $B=\{x\}\cup B'$ with $B'\subseteq U$ and $|B'|=r+1$.
Since $|U|=r+1$, this forces $B' = U$ and hence $B=\{x\}\cup U$.
But then every $C\in\cF_3$ satisfies $C\subseteq \{x\}\cup U=B$, contradicting the antichain property.
Thus $x\notin B$ for all $B\in\cF_{r+2}$. 

Now take any $B\in\cF_{r+2}$.
If there exists $q\in Q$ with $q\notin B$, then $B\subseteq [n]\setminus\{x,q\}\in\cF_{n-2}$ and the inclusion is proper
because $|B|=r+2<(2r+2)-2=n-2$, again contradicting that $\cF$ is an antichain.
Hence $Q\subseteq B$ for all $B\in\cF_{r+2}$, proving Claim~\ref{claim:3.2}.
\end{proof}

By \eqref{eq:Q-size-r} and Claim~\ref{claim:3.2}, every $B\in\cF_{r+2}$ has the form
\[
B=Q\cup\{b,b'\}\qquad (b,b'\in R,\ b\neq b').
\]
Since $|\cF_{r+2}|\ge r$, we can choose $r$ distinct such pairs $\{b,b'\}$.

However, for each $i\in\{1,\dots,r-1\}$ and each $B\in\cF_{r+2}$ we must have $\widetilde{A}_i\not\subseteq B$.
Equivalently, none of the pairs $\{b,b'\}$ may contain any $a_i$.
Thus every pair corresponding to a member of $\cF_{r+2}$ must lie in $R\setminus\{a_1,\dots,a_{r-1}\}$,
which has size at most $2$.
Hence there is at most one possible pair, contradicting $|\cF_{r+2}|\ge r\ge 4$.

This contradiction completes Case~2, and hence no $r$-multiplicity antichain on $[n]$ can satisfy $|S(\cF)|=n-3$
when $r+3\le n\le 2r+2$.
Therefore $|S(\cF)|\le n-4$ for all such $n$. 
\end{proof}

We are now ready to present the following.

\begin{proof}[Proof of Theorem~\ref{thm:main_lower_bound}]
By Proposition~\ref{prop:lower-2r+2}, in particular at $n=2r+2$ we have $g(2r+2,r)\le (2r+2)-4<(2r+2)-3$. By Definition~\ref{def:threshold}, this forces $n_0(r)\ge 2r+2$.    
\end{proof}

\section{Upper Bound}\label{sec:main-upper-b}

\begin{proposition}\label{thm:n0-upper-2r-logr}
For every integer $r\ge 2$ and every integer $n$ satisfying
\begin{equation}\label{eq:thm:n0-upper-2r-logr-disp_lower_b}
n\ge 2r+2\log_2 r+\log_2\log_2 r+15,
\end{equation}
one has $g(n,r)=n-3$. 
\end{proposition}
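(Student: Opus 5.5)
By Lemma~\ref{lem:no-n-2} we already have $g(n,r)\le n-3$, so it suffices to construct, for every $n$ satisfying \eqref{eq:thm:n0-upper-2r-logr-disp_lower_b}, an $r$-multiplicity antichain $\cF\subseteq 2^{[n]}$ with $S(\cF)=\{2,3,\dots,n-2\}$. The section on the lower bound tells us what such a family must look like near the extreme levels: $\cF_2$ is a star $\{x,p\}$ with $p\in P$, $|P|\ge r$, and $\cF_{n-2}$ consists of the complements $[n]\setminus\{x,q\}$ with $q\in Q$. I would build the construction around this forced shape.

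\emph{Basic mechanism: labels times a free block.} Partition the ground set into a \emph{label region} $V$ and a \emph{free block} $W$. For each level $t$ I would take
\[
\cF_t=\{L_t\cup C:\ C\in\mathcal C_t\},
\]
where $L_t\subseteq V$ is a label and $\mathcal C_t$ is a collection of $r$ distinct subsets of $W$ of size $t-|L_t|$. If the labels $\{L_t\}$ form an antichain of pairwise distinct sets, then an inclusion $L_s\cup C\subseteq L_t\cup C'$ with $s\ne t$ forces $L_s\subseteq L_t$, which is impossible. Within a single level, distinct sets of equal size are automatically incomparable. So the only requirements are that the labels form an antichain and that $\binom{|W|}{t-|L_t|}\ge r$, which holds whenever $1\le t-|L_t|\le |W|-1$ and $|W|\ge r$. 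Lemma~\ref{lem:labels-general} supplies exactly such a label family, with $m\approx\log_2 K$ by Corollary~\ref{cor:choose-m-K}: labels of size $2$ (all containing $b$) for the low levels $4\le t\le \ell+1$, and labels of size $\ell$ for the levels $\ell+2\le t\le K$.

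\emph{Covering the whole range by symmetry.} Using a single gadget with block $W$ only reaches levels up to about $\ell+|W|$. To reach $n-2$ as well, I would run two copies of this construction: one for the lower half of the levels and one for the upper half, the latter obtained by complementation. Concretely, split $[n]=\{x\}\cup V\cup V'\cup W$ with $|V|,|V'|\approx \log_2 r+\tfrac12\log_2\log_2 r+O(1)$, and choose the block $W$ with $|W|\approx 2r$. Low levels use labels in $V$ and avoid $V'$; high levels contain all of $V$ and use complements of labels in $V'$. Then no low set can contain a high set, simply because of sizes, and every low set is contained in $V\cup W\cup(\text{part of }V')$. One must check that a low set is never contained in a high one; this is guaranteed because high sets omit a nonempty complement-label inside $V'$ while the low sets there are built to meet it, or, alternatively, by using an additional marker element as in the role of $b$. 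The special levels $2,3$ and $n-3,n-2$ are handled directly via the point $x$: take $\cF_2=\{\{x,p\}\}$, $\cF_3=\{x\}\cup\binom{\cdot}{2}$-type sets inside a set of size $\approx r$, and their mirror images. All middle sets must then avoid $x$ together with the relevant $p$'s, or must contain the corresponding $Q$, exactly as forced in the proof of Proposition~\ref{prop:lower-2r+2}. Counting the pieces, $|W|\approx 2r$ plus two label regions of size $\approx\log_2 r+\tfrac12\log_2\log_2 r$ each, plus $O(1)$ extra points, gives precisely the threshold \eqref{eq:thm:n0-upper-2r-logr-disp_lower_b}. For larger $n$ the surplus points are absorbed into $W$.

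\emph{Main obstacle.} The hard step is the interface between the four extreme levels $2,3,n-3,n-2$ and the gadget levels. Every set at a level $t\in[4,n-4]$ must avoid containing any $\{x,p\}$ and must not be contained in any $[n]\setminus\{x,q\}$. This forces each middle set either to omit $x$ and contain all of $Q$, or to contain $x$ and avoid $P$. I would try to resolve this by letting $P$ and $Q$ be two disjoint halves of $W$, each of size about $r$. Low gadget levels then take the form $\{x\}\cup(\text{subset of }Q\cup V)$, and high gadget levels take the mirrored form. The remaining verification is that the middle level $t\approx n/2$, where the two regimes meet, still has at least $r$ sets. That is exactly the point where the constant $15$ and the precise choice of $m$ from Corollary~\ref{cor:choose-m-K} get used.
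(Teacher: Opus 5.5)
There is a genuine gap, and it sits in the step you defer to the end: with your allocation the lower and upper halves never meet. You confine the low gadget sets (levels $\ge 4$) to $\{x\}\cup Q\cup V$, so their free block is $Q$, with $|Q|\approx r$. Take a set $\{x\}\cup L_t\cup C$ with a label from Lemma~\ref{lem:labels-general}, so $|L_t|\le \ell=\lfloor m/2\rfloor$, and with $C\subseteq Q$. It needs $|C|\le |Q|-1$ to have $r$ choices, so the low levels stop at about $r+\ell$. By your symmetry, the high levels begin at about $n-r-\ell$. With your budget $n\approx 2r+|V|+|V'|+O(1)\approx 2r+2m+O(1)$, this leaves roughly $2m-2\ell\approx m\approx\log_2 r$ consecutive middle levels with no sets at all. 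The deficit grows with $r$, so no additive constant such as the $15$ can repair it. The accounting error is this: the $m$ points of a label region contribute only about $m/2$ to set sizes, while each half needs a free block of size about $n/2-\ell$, not about $r$.

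The missing idea, which is how the paper proceeds, is to build only the lower half and then complement all of it. Let $k=\lfloor n/2\rfloor$ and build an antichain $\widetilde{\cF}$ with $r$ sets on each level $2,\dots,k$, all containing a fixed point $a$:
\begin{itemize}
\item level $2$: the sets $\{a,p_j\}$;
\item level $3$: the sets $\{a,u,x_j\}$, with a private marker $u$ used nowhere else;
\item levels $4\le t\le k$: the sets $\{a\}\cup L_t\cup B_{t,j}$, where the $B_{t,j}$ lie in a free block $R'$ consisting of \emph{everything} outside $\{a,u\}\cup P\cup V$, of size at least $k-2$.
\end{itemize}
Then $\cA=\widetilde{\cF}\cup\{[n]\setminus F:\ F\in\widetilde{\cF}\}$ is an antichain. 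Indeed, $F\not\subseteq [n]\setminus G$ because of $a$, and $[n]\setminus G\not\subseteq F$ by cardinality, with $a$ again settling the equal-size case when $n$ is even. This gives $Q=P$ automatically and needs only one label region. The budget is $n\ge r+m+k+1$, i.e.\ essentially $\lceil n/2\rceil\ge r+m+1$; the factor $2$ in $2\log_2 r$ comes from this halving, not from two separate label regions.

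Two smaller points:
\begin{itemize}
\item Your justification that a low set is never contained in a high one, via complement-labels in $V'$ that low sets are ``built to meet'', contradicts your own requirement that low sets avoid $V'$. The correct reason is the marker $x$ together with cardinality, as your last paragraph effectively recognizes.
\item Your level $3$ is underspecified. If the pairs $e$ in $\{x\}\cup e$ come from $Q$, those $3$-sets lie inside gadget sets $\{x\}\cup L_t\cup C$ whenever $e\subseteq C$. A private point like the paper's $u$ is what prevents this.
\end{itemize}
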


\begin{proof}
Fix $r\ge2$ and an integer $n$ satisfying the displayed lower bound~\eqref{eq:thm:n0-upper-2r-logr-disp_lower_b}, and set \(k:=\lfloor n/2\rfloor\). Let $m\ge4$ be the least integer such that
\begin{equation}\label{eq:choose-m-k-final}
\binom{m}{\lfloor m/2\rfloor}\ \ge\ k,
\end{equation}
and put $\ell:=\lfloor m/2\rfloor$. By Corollary~\ref{cor:choose-m-K} (applied with $K=k$) we have
\begin{equation}\label{eq:m-asymp-final}
m \le \log_2 k+\frac12\log_2\log_2 k+3.
\end{equation}

We next verify that
\begin{equation}\label{eq:room-k-r-m-final}
k\ge r+m+1.
\end{equation}

\medskip
\noindent\underline{\emph{Case 1: $n<8r$.}}
Then $k=\lfloor n/2\rfloor\le 4r$.
Under the hypothesis~\eqref{eq:thm:n0-upper-2r-logr-disp_lower_b} this case can only occur when $r\ge4$ (since for $r=2,3$ the right-hand side of~\eqref{eq:thm:n0-upper-2r-logr-disp_lower_b} is at least $8r$).
Using~\eqref{eq:m-asymp-final} and $k\le4r$ we obtain
\[
m+1\le \log_2(4r)+\frac12\log_2\log_2(4r)+4
=\log_2 r+6+\frac12\log_2(\log_2 r+2).
\]
On the other hand,
\[
k-r\ \ge\ \frac{n-2r-1}{2}
\ \ge\ \log_2 r+\frac12\log_2\log_2 r+\frac{15-1}{2}.
\]
Since $r\ge4$ implies $\log_2\log_2 r+2\ge \log_2(\log_2 r+2)$, the last display gives $k-r\ge m+1$, which is equivalent to \eqref{eq:room-k-r-m-final}.

\medskip
\noindent\underline{\emph{Case 2: $n\ge8r$.}}
Then $k\ge4r\ge8$, so $k-r\ge \frac34k$.
By minimality of $m$ we have $k>\binom{m-1}{\lfloor(m-1)/2\rfloor}\ge\binom{m-1}{2}$, hence $k\ge (m-1)(m-2)/2+1$.
If $m\leq 5$ then $m+1\leq 6\le\frac34k$ since $k\ge8$.
If $m\ge6$ then one checks that $\frac38(m-1)(m-2)\ge m+1$, so
\[
k-r\ \ge\ \frac34k\ \ge\ \frac38(m-1)(m-2)\ \ge\ m+1,
\]
and again \eqref{eq:room-k-r-m-final} follows.

Choose disjoint sets
\[
[n]=\{a\}\ \dot\cup\ P\ \dot\cup\ \{u\}\ \dot\cup\ V\ \dot\cup\ R\ \dot\cup\ W
\]
with $|P|=r$, $|V|=m+1$, $|R|=k-2$, and $W$ the (possibly empty) remaining set
(which exists by \eqref{eq:room-k-r-m-final}).  Write
\[
P=\{p_1,\dots,p_r\},\qquad V=V_0\cup\{b\},\qquad |V_0|=m,
\]
and set $R':=R\cup W$, so that $|R'|\ge |R|=k-2\ge r$ (using \eqref{eq:room-k-r-m-final} and $m\ge4$).

Apply Lemma~\ref{lem:labels-general} with $K=k$.
The hypothesis \eqref{eq:choose-m-k-final} implies $\binom{m}{\ell}\ge k$ and hence
$k-(\ell+1)\le \binom{m}{\ell}$, while $k\ge \ell+1$ holds since $m\ge4$ and $k\ge r+m+1$ by
\eqref{eq:room-k-r-m-final}.
Therefore we obtain sets $L_t\subseteq V$ for every $t\in\{4,5,\dots,k\}$ such that:
\begin{enumerate}[label=(\roman*)]
\item $|L_t|=2$ for $4\le t\le \ell+1$, and these $L_t$ are distinct and all contain $b$;
\item $|L_t|=\ell$ for $\ell+2\le t\le k$, and these $L_t$ are distinct and all lie in $V_0$;
\item $\{L_t:4\le t\le k\}$ is an antichain under inclusion.
\end{enumerate}
Define $\widetilde{\cF}$ level-by-level as follows.
\begin{itemize}
\item \emph{Size $2$:} for $j\in[r]$, set $F_{2,j}:=\{a,p_j\}$.
\item \emph{Size $3$:} choose distinct $x_1,\dots,x_r\in R'$ and set $F_{3,j}:=\{a,u,x_j\}$.
\item \emph{Size $t\in\{4,\dots,k\}$:} let $s(t):=t-1-|L_t|$.
Note that $s(t)\ge1$ and, since $\ell\ge2$ and $|R'|\ge k-2$,
\[
s(t)\ \le\ t-1-2\ \le\ k-3\ \le\ |R'|-1.
\]
Hence $1\le s(t)\le |R'|-1$ and therefore
\[
\binom{|R'|}{s(t)}\ \ge\ |R'|\ \ge\ r,
\]
so we can choose $r$ distinct $s(t)$-subsets $B_{t,1},\dots,B_{t,r}\subseteq R'$.
Define
\[
F_{t,j}:=\{a\}\cup L_t\cup B_{t,j}\qquad (j\in[r]).
\]
\end{itemize}Let
\[
\widetilde{\cF}:=\{F_{t,j}: 2\le t\le k,\ j\in[r]\}.
\]
Then every $F\in\widetilde{\cF}$ contains $a$, and for each $t\in\{2,3,\dots,k\}$ the family $\widetilde{\cF}$
contains at least $r$ sets of size $t$.

We next check that $\widetilde{\cF}$ is an antichain.
If $F_{t,j}\subseteq F_{t',j'}$ with $t,t'\ge4$, then by disjointness of the parts
\[
L_t\subseteq L_{t'}\quad\text{and}\quad B_{t,j}\subseteq B_{t',j'}.
\]
If $t\neq t'$, this contradicts the label antichain property (iii). If $t=t'$, then
$|B_{t,j}|=|B_{t,j'}|=s(t)$ and $B_{t,j}\subseteq B_{t,j'}$ forces $B_{t,j}=B_{t,j'}$,
contradicting distinctness. Finally, the cases involving sizes $2$ or $3$ cannot create inclusion
because $P$, $\{u\}$, and $R'$ are disjoint and each of $F_{2,j},F_{3,j}$ contains an element
($p_j$ or $u$) that never appears in any $F_{t',j'}$ with $t'\ge4$.
Thus $\widetilde{\cF}$ is an antichain. Moreover, $|F|\le k\le n/2$ for all $F\in\widetilde{\cF}$.

Define
\[
\cA:=\widetilde{\cF}\cup\{[n]\setminus F:\ F\in\widetilde{\cF}\}.
\]
Since $\widetilde{\cF}$ is an antichain, so is its complement family $\{[n]\setminus F:F\in\widetilde{\cF}\}$.
It remains to rule out inclusions between $\widetilde{\cF}$ and its complement family.
Fix $F,G\in\widetilde{\cF}$. Because every set in $\widetilde{\cF}$ contains $a$, the complement $[n]\setminus G$
does not contain $a$, hence $F\not\subseteq [n]\setminus G$.
For the reverse inclusion, note that
\[
|[n]\setminus G|\ =\ n-|G|\ \ge\ n-k\ =\ \lceil n/2\rceil\ \ge\ k,
\qquad\text{whereas}\qquad
|F|\ \le\ k.
\]
If $n$ is odd then $|[n]\setminus G|\ge k+1>|F|$, so $[n]\setminus G\not\subseteq F$.
If $n$ is even, then either $|[n]\setminus G|>k\ge |F|$, in which case $[n]\setminus G\not\subseteq F$ by cardinality,
or else $|[n]\setminus G|=k$ (equivalently $|G|=k$). In the latter case,
$[n]\setminus G\subseteq F$ would force $[n]\setminus G=F$, contradicting $a\in F$ and $a\notin [n]\setminus G$. Therefore $\cA$ is an antichain.

By construction, for each $t\in\{2,\dots,k\}$ there are at least $r$ sets of size $t$ in $\cA$,
and taking complements yields at least $r$ sets of each size
$t\in\{n-k,\dots,n-2\}$.
Since $k=\lfloor n/2\rfloor$, we have $n-k=\lceil n/2\rceil\in\{k,k+1\}$ and hence
\[
\{2,\dots,k\}\ \cup\ \{n-k,\dots,n-2\}\ =\ \{2,3,\dots,n-2\}.
\]
Therefore every size $t\in\{2,3,\dots,n-2\}$ appears in $\cA$ at least $r$ times, so
$|S(\cA)|=n-3$ and $\cA$ is an $r$-multiplicity antichain.

Thus, we construct an $r$-multiplicity antichain $\cA\subseteq 2^{[n]}$ with $|S(\cA)|=n-3$, hence \(g(n,r) \ge n-3\). On the other hand, since $r\ge2$ and the present hypothesis on $n$ implies $n\ge4$,
Lemma~\ref{lem:no-n-2} yields the universal upper bound $g(n,r)\le n-3$. Therefore $g(n,r)=n-3$.
\end{proof}

We are now ready to present the following.

\begin{proof}[Proof of Theorem~\ref{thm:main_upper_bound}]
By Proposition~\ref{thm:n0-upper-2r-logr}, we know that $g(n,r)=n-3$ for every integer $r\ge 2$ and every integer $n\ge 2r+2\log_2 r+\log_2\log_2 r+15$. By Definition~\ref{def:threshold}, this implies \begin{equation}\label{eq:logloglog_upper1}
n_0(r)\le 2r+2\log_2 r+\log_2\log_2 r+15.
\end{equation}In particular, one has \(n_0(r) \le 2r+2\log_2 r + O(\log_2\log_2 r)\).
\end{proof}

\section{Concluding remarks}\label{sec:remarks}

In this paper we studied the threshold $n_0(r)$ in the Erd\H{o}s--Trotter problem (Problem~\ref{prob:EP776}). For $r=2$ and $r=3$ one has $n_0(2)=3$ and $n_0(3)=8$.
For all integers $r\ge 4$, Theorem~\ref{thm:main_lower_bound} and
Theorem~\ref{thm:main_upper_bound} yield the two-sided estimate
\[
2r+2 \le n_0(r) \le 2r+2\log_2 r + O(\log_2\log_2 r).
\]
In particular, this determines the leading linear term of $n_0(r)$ as $2r$.

A natural next question is whether the error term beyond the linear main term is bounded.
\begin{problem}\label{prob:2r+O1}
Does there exist an absolute constant $C>0$ such that
$n_0(r)\le 2r+C$ for all $r\ge 4$?
\end{problem}

For several small values of $r$, namely $r=4,5,\dots,10$, we found explicit $r$-multiplicity antichains on $[n]$ with $n=2r+5$ and
\[
S(\cF)=\{2,3,\dots,n-2\}.
\]
Equivalently, $g(2r+5,r)=(2r+5)-3$ for $4\le r\le 10$.
The corresponding constructions are recorded in~\cite[\texttt{2r\_plus\_5\_r\_4\_to\_10.txt}]{Git}. These examples show that the extremal value $g(n,r)=n-3$ is attained already at $n=2r+5$ for $4\le r\le 10$, which provides some numerical evidence toward Problem~\ref{prob:2r+O1}. Using our current search implementation~\cite[\texttt{antichain\_solver\_one\_based.py}]{Git}, we did not obtain a construction for $r=11$ within 24 hours of runtime.

\appendix

\renewcommand{\theclaim}{\thesection.\arabic{claim}}
\setcounter{claim}{0}

\section{Determining \texorpdfstring{$n_0(2)$ and $n_0(3)$}{n0(2) and n0(3)}}\label{app:small-r}

\subsection{The case \texorpdfstring{$r=2$}{r=2}}\label{app:n02}
We begin with the extremal value in the smallest nontrivial ground set, which will serve as the base case for the threshold statement.
\begin{proposition}\label{prop:g32}
One has $g(3,2)=1$.
\end{proposition}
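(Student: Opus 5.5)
We have to show two things: $g(3,2)\ge 1$, via an explicit family, and $g(3,2)\le 1$. Lemma~\ref{lem:no-n-2} requires $n\ge4$, so it does not apply here, and the upper bound has to be argued directly on the ground set $[3]$.

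\emph{Lower bound.} Take $\cF=\{\{1\},\{2\}\}$. It is an antichain, it has $|\cF_1|=2\ge r$, and $S(\cF)=\{1\}$. Hence $g(3,2)\ge1$. One could equally use two $2$-sets.

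\emph{Upper bound.} Let $\cF\subseteq 2^{[3]}$ be a $2$-multiplicity antichain. Levels $0$ and $3$ each contain only one set, so they cannot occur, and therefore $S(\cF)\subseteq\{1,2\}$. It remains to rule out $S(\cF)=\{1,2\}$, which is the only possible case with two levels. Suppose both levels occur. Then $\cF_1$ contains two distinct singletons $\{x\},\{y\}$, and $\cF_2$ contains at least two $2$-subsets of $[3]$. Claim~\ref{claim:2.4.1} does the work here, and its proof is valid for any $n$: every $(n-1)$-set, which for $n=3$ means every $2$-set, contains $x$ or $y$. Explicitly, a $2$-subset of $[3]$ misses only one element, so it contains at least one of $x,y$. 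It therefore contains one of the singletons of $\cF$, which contradicts the antichain property. Thus $|S(\cF)|\le1$, and $g(3,2)=1$.

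There is no real obstacle. The only point needing care is that Lemma~\ref{lem:no-n-2} is stated only for $n\ge4$, so the small case must be handled by the direct argument above rather than by citing that lemma.
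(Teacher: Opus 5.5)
Your proposal is correct and follows essentially the same route as the paper: exclude levels $0$ and $3$, note that two singletons in $\cF_1$ would each be contained in some $2$-subset of $[3]$, and use $\{\{1\},\{2\}\}$ for the lower bound. You are also right that the argument of Claim~\ref{claim:2.4.1} works verbatim for $n=3$, even though Lemma~\ref{lem:no-n-2} is stated only for $n\ge 4$.
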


\begin{proof}
Let $\mathcal F\subseteq 2^{[3]}$ be a $2$-multiplicity antichain. Clearly, levels $0$ and $3$ cannot occur, since $|\binom{[3]}{0}|=|\binom{[3]}{3}|=1<2$. Assume for contradiction that both levels $1$ and $2$ occur, i.e.\ $\{1,2\}\subseteq S(\mathcal F)$.
Then $|\cF_1|\ge2$, so we may choose two distinct singletons $\{x\},\{y\}\in\cF_1$.
Every $2$-subset of $[3]$ contains at least one of $x,y$, hence every $H\in\binom{[3]}{2}$
satisfies $\{x\}\subseteq H$ or $\{y\}\subseteq H$, contradicting the antichain property.
Therefore at most one of the levels $1$ and $2$ can occur, and thus $|S(\cF)|\le1$.
On the other hand, $\cF=\{\{1\},\{2\}\}$ is a $2$-multiplicity antichain with $|S(\cF)|=1$.
Hence $g(3,2)=1$.
\end{proof}
We can now determine the threshold $n_0(2)$ by combining Proposition~\ref{prop:g32} with our general upper bound and an explicit verification for the remaining finite range.

\begin{theorem}\label{thm:n02}
One has $n_0(2)=3$.
\end{theorem}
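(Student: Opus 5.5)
Two things must be shown: (a) $g(n,2)=n-3$ for every $n>3$, and (b) $g(3,2)\neq 0=3-3$. Part (b) is Proposition~\ref{prop:g32}, which gives $g(3,2)=1\ne 0$. So $n_0(2)$ cannot be smaller than $3$, since the defining property would then have to hold at $n=3$.

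For (a), the upper bound $g(n,2)\le n-3$ for $n\ge4$ is Lemma~\ref{lem:no-n-2}, so only constructions are needed. Proposition~\ref{thm:n0-upper-2r-logr} with $r=2$ gives $g(n,2)=n-3$ for all $n\ge 4+2+0+15=21$. This leaves the finite range $4\le n\le 20$.

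For this range I would avoid a computer check and give a direct family. The natural candidate is the doubling construction of Section~\ref{sec:main-upper-b}, simplified for $r=2$. Fix $a\in[n]$, put $k=\lfloor n/2\rfloor$, and build an antichain $\widetilde\cF$ of sets containing $a$ with two sets at each size $2,\dots,k$. Then $\cA=\widetilde\cF\cup\{[n]\setminus F\}$ covers $\{2,\dots,n-2\}$, and $\cA$ is an antichain by the same argument as in Proposition~\ref{thm:n0-upper-2r-logr}. That argument needs only two facts: every member of $\widetilde\cF$ contains $a$, and all sizes are at most $k$.

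To build $\widetilde\cF$, I would use a chain-avoiding staircase on $[n]\setminus\{a\}$. Take two disjoint sequences of elements $x_1,x_2,\dots$ and $y_1,y_2,\dots$, and let the level-$t$ sets be
\[
\{a,x_1,\dots,x_{t-2},\, z_t\}\quad\text{and}\quad \{a,y_1,\dots,y_{t-2},\, z'_t\},
\]
with fresh elements $z_t,z'_t$ chosen so that a lower-level set always contains an element missing from every higher-level set. This is really a two-sided version of the labels of Lemma~\ref{lem:labels-general} with $m$ tiny. I expect the element budget to fail for $n$ near $20$, because fresh elements cost about $k$ in total. In that case I would use the standard alternative $\{a\}\cup(\{1,\dots,t\}\setminus\{i\})$-style shifts, where the missing element grows with $t$ so that no inclusion can occur.

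If no uniform pattern fits, I would fall back on what the paper seems to intend (``explicit verification''). For each $4\le n\le 20$, exhibit a $2$-multiplicity antichain with $S=\{2,\dots,n-2\}$, found by computer search. Only half the levels need to be listed, thanks to the complement doubling. For small $n$ this is easy by hand: $n=4$ needs only level $1$ (two singletons), and $n=5$ needs levels $2$ and $3$ (for instance $\{1,2\},\{1,3\},\{2,3,4\},\{2,3,5\}$). The main obstacle is the middle range, roughly $n\in[8,20]$, where the construction must be certified. I would try the computer search first and present the resulting families as data, as the paper does for its other small cases.
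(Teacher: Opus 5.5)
Your plan is the paper's proof. Proposition~\ref{prop:g32} gives $n_0(2)\ge 3$; Lemma~\ref{lem:no-n-2} together with Proposition~\ref{thm:n0-upper-2r-logr} leaves only a finite range, settled by explicitly exhibited computer-found families, and your $4\le n\le 20$ is correct and marginally sharper than the paper's $4\le n\le 21$.

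As an aside, your staircase with all $z_t,z'_t$ fresh needs $4k-5$ points and already fails at $n=6$. Linking the two chains fixes this: take $\{a,p\},\{a,q\}$ at level $2$, and for $3\le t\le k$ take the level-$t$ sets $\{a,x_1,\dots,x_{t-2},y_{t-1}\}$ and $\{a,y_1,\dots,y_{t-2},x_{t-1}\}$, with $y_{k-1}:=x_{k-1}$. This is an antichain on only $2k\le n$ points, so your doubling step would make the finite check computer-free.
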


\begin{proof}
Proposition~\ref{prop:g32} implies the lower bound $n_0(2)\ge 3$.
On the other hand, \eqref{eq:logloglog_upper1} yields the upper bound $n_0(2)\le 21$.
By Definition~\ref{def:threshold}, to prove $n_0(2)=3$ it is enough to verify that
$g(n,2)=n-3$ holds for every integer $n$ with $4\le n\le 21$. Moreover, Lemma~\ref{lem:no-n-2} gives the general upper bound $g(n,2)\le n-3$. Thus it suffices to construct, for each integer $4\le n\le 21$, a $2$-multiplicity antichain on $[n]$
whose occurring set sizes cover exactly $n-3$ distinct levels.
This certifies $g(n,2)\ge n-3$ for all $4\le n\le 21$, and hence $g(n,2)=n-3$ in this range.
The constructions are recorded in our GitHub repository~\cite[\texttt{r2\_n\_4\_to\_21.txt}]{Git}.
\end{proof}

\subsection{The case \texorpdfstring{$r=3$}{r=3}}\label{app:n03}

In this subsection we determine $n_0(3)$. The key input is a small obstruction at $n=8$,
which we obtain by a short structural reduction followed by an exhaustive check.

\begin{proposition}\label{prop:g83}
One has $g(8,3)\le 4$.
\end{proposition}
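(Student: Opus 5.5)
We must show that no $3$-multiplicity antichain $\cF\subseteq 2^{[8]}$ has $|S(\cF)|=5$. By Lemma~\ref{lem:no-n-2} we already have $|S(\cF)|\le 5$, so we assume $|S(\cF)|=5$, i.e.\ exactly four of the nine levels $0,\dots,8$ are missing. The plan is to pin down $S(\cF)$ as in the proof of Proposition~\ref{prop:lower-2r+2}, derive the star structure, and then finish with a small case analysis, backed by an exhaustive computer check on the residual configurations if needed.

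\textbf{Step 1: the level set.} Levels $0$ and $8$ cannot occur. If $1\in S(\cF)$, then three singletons $x_1,x_2,x_3$ are forbidden from every other member, so all sets lie in a $5$-set and levels $6,7,8$ are missing. Together with level $0$ this already accounts for four missing levels, so $S(\cF)=\{1,2,3,4,5\}$. But level $5$ then consists of a single set (the full $5$-set), contradicting multiplicity. By complementation, $7\notin S(\cF)$ as well. Hence
\[
S(\cF)=\{2,3,4,5,6\}.
\]

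\textbf{Step 2: structure of levels $2$ and $6$.} Let $\cM$ be the family of complements of the members of $\cF_6$; each member of $\cF_2$ meets each member of $\cM$. If $\cF_2$ contains two disjoint edges, then $\cM$ lies in a $4$-cycle $K_{2,2}$, and symmetrically for $\cM$. With $r=3$ this is no longer immediately contradictory (unlike the case $r\ge4$), so we must keep three possibilities: both families are stars, one of them is a triangle, or one lies in a $K_{2,2}$. Using Lemma~\ref{lem:pairwise-intersecting-2sets}, we enumerate the compatible pairs $(\cF_2,\cM)$ up to the symmetry of $S_8$ and complementation. There are only a handful:
\begin{itemize}
\item both are stars with a common centre $x$;
\item a triangle paired with edges meeting all three triangle edges, which must themselves lie inside the triangle;
\item a $K_{2,2}$ configuration against the pair of "diagonal" edges $\{a,b\},\{c,d\}$, which has only two edges and so is impossible.
\end{itemize}

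\textbf{Step 3: the middle levels.} In each surviving case, the middle levels are constrained. Sets of sizes $3,4,5$ must avoid containing any member of $\cF_2$, and must not be contained in any member of $\cF_6$. For example, in the common-centre star case with leaf sets $P$ and $Q$ (each of size at least $3$), sets in $\cF_3$ and $\cF_4$ that avoid $x$ must contain $Q$, and sets containing $x$ lie in $\{x\}\cup U$ with $|U|\le 4$. The arguments mirror Cases~1 and~2 of Proposition~\ref{prop:lower-2r+2}. This leaves a bounded family of choices for $\cF_3$, $\cF_4$ and $\cF_5$ on a small ground set.

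\textbf{Main obstacle.} I expect that a purely hand-made contradiction will not close every subcase, especially the star case with $|P|=|Q|=3$ and $|U|=4$, where there is genuine room. There I would run an exhaustive search, up to symmetry, over the choices of $3$ sets in each of levels $3,4,5$ consistent with the fixed $\cF_2$ and $\cF_6$, checking the antichain condition. Since each level may be reduced to exactly $3$ sets by Remark~\ref{rem:eq-vs-ge}, this search is small.
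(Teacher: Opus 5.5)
\textbf{The gap is in Step 2.} Your Step 1 and the reduction to exactly three sets per level agree with the paper. The paper then runs an exhaustive search over all choices of three sets on each of the levels $2,\dots,6$, with no assumption on the shape of $\cF_2$ and $\cF_6$. You instead use Step 2 to cut that search down, and Step 2 is wrong.

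Your dismissal of the case where $\cF_2$ contains two disjoint edges $\{a,b\},\{c,d\}$ is the $r\ge4$ argument from Proposition~\ref{prop:lower-2r+2}. It tacitly takes $\cM$ to be all four edges $\{a,c\},\{a,d\},\{b,c\},\{b,d\}$. For $r=3$, $\cM$ need only consist of three of them, say $\cM=\{\{a,c\},\{a,d\},\{b,c\}\}$. Then the $2$-sets meeting every member of $\cM$ are exactly $\{a,b\},\{a,c\},\{c,d\}$. So $\cF_2=\{\{a,b\},\{a,c\},\{c,d\}\}$ together with $\cF_6=\{[8]\setminus\{a,c\},\,[8]\setminus\{a,d\},\,[8]\setminus\{b,c\}\}$ is a legitimate cross-intersecting configuration: two paths with three edges sharing the middle edge $\{a,c\}$. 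It is neither star--star nor triangle--triangle, and it is missing from your list. Your computer check runs only over the configurations on that list, so this case is never examined. As written, the argument does not cover all antichains.

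\textbf{The repair is short.} In the missing configuration, a set $B$ of size $3$, $4$ or $5$ must meet $\{a,c\},\{a,d\},\{b,c\}$ and contain none of $\{a,b\},\{a,c\},\{c,d\}$. So $B$ contains exactly one of $a,c$. If $a\in B$, then $b,c\notin B$ and $B$ misses $\{b,c\}$; if $c\in B$, then $a,d\notin B$ and $B$ misses $\{a,d\}$. Hence $\cF_3=\varnothing$. The triangle case dies the same way: $B$ would need at least two and at most one vertex of the triangle.

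\textbf{The star case also closes by hand.} Your expectation that it needs a computer is too pessimistic. With $|P|=|Q|=3$, both $U=[8]\setminus(\{x\}\cup P)$ and $R=[8]\setminus(\{x\}\cup Q)$ have size $4$.
\begin{itemize}
\item At most one member of $\cF_3$ avoids $x$ (it would have to be $Q$), so at least two lie inside $\{x\}\cup U$. Since $\{x\}\cup U$ is the only candidate $5$-set through $x$, it follows that $\{x\}\cup U\notin\cF_5$.
\item Hence $\cF_5=\{Q\cup e:\ e\in G\}$ for a set $G$ of three edges on $R$.
\item Then $Q\notin\cF_3$, so $\cF_3=\{\{x\}\cup e:\ e\in H\}$ for three edges $H$ on $U$.
\item A member $\{x\}\cup(U\setminus\{u\})$ of $\cF_4$ forces $u\in\bigcap H$, and a member $Q\cup\{y\}$ forces $y\in R\setminus\bigcup G$. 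Each leaves at most one choice, so $|\cF_4|\le 2$, a contradiction.
\end{itemize}
So once the missing case is added, your route gives a fully computer-free proof, which the paper's exhaustive search does not.
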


\begin{proof}
Let $\mathcal F\subseteq 2^{[8]}$ be a $3$-multiplicity antichain. First, levels $0$ and $8$ cannot occur: indeed, $|\binom{[8]}{0}|=|\binom{[8]}{8}|=1$, so neither level can support multiplicity $3$.

\begin{claim}\label{claim:app_a3_step2}
If level $1$ occurs, then $|S(\mathcal F)|\le 4$.
\end{claim}
\begin{proof}[Proof of Claim~\ref{claim:app_a3_step2}]
Assume $1\in S(\mathcal F)$, so $|\mathcal F_1|\ge 3$.
Choose three distinct singletons $\{x\},\{y\},\{z\}\in\mathcal F_1$.
By the antichain property, no other set in $\mathcal F$ may contain any of $x,y,z$. Hence every set in $\mathcal F\setminus \mathcal F_1$ is contained in
\[
[8]\setminus\{x,y,z\},
\]
which has size $5$. In particular, every set in $\mathcal F$ has size at most $5$.
Moreover, at level $5$ there is only one possible set, namely $[8]\setminus\{x,y,z\}$,
so $|\mathcal F_5|\le 1<3$, and thus $5\notin S(\mathcal F)$.
Therefore,
\[
S(\mathcal F)\subseteq \{1,2,3,4\},
\]
and hence $|S(\mathcal F)|\le 4$.
\end{proof}

\begin{claim}\label{claim:app_a3_step3}
If level $7$ occurs, then $|S(\mathcal F)|\le 4$.
\end{claim}
\begin{proof}[Proof of Claim~\ref{claim:app_a3_step3}]
Assume $7\in S(\mathcal F)$, so $|\mathcal F_7|\ge 3$.
Pick three distinct $7$-sets $H_1,H_2,H_3\in\mathcal F_7$. Each can be written as
\[
H_i=[8]\setminus\{a_i\}\qquad (i=1,2,3),
\]
with $a_1,a_2,a_3$ pairwise distinct.
Let $A\in\mathcal F$ with $|A|\le 6$. Since $\mathcal F$ is an antichain, we must have
$A\not\subseteq H_i$ for every $i$, i.e.\ $a_i\in A$ for every $i$. Thus
\[
\{a_1,a_2,a_3\}\subseteq A
\qquad\text{for all }A\in\mathcal F \text{ with }|A|\le 6.
\]
In particular, no set of size $0,1,2$ can belong to $\mathcal F$, and also the only possible $3$-set is $\{a_1,a_2,a_3\}$, so $3\notin S(\mathcal F)$.
Consequently,
\[
S(\mathcal F)\subseteq \{4,5,6,7\},
\]
and hence $|S(\mathcal F)|\le 4$.
\end{proof}

Suppose for contradiction that $|S(\mathcal F)|\ge 5$.
By Claims~\ref{claim:app_a3_step2} and~\ref{claim:app_a3_step3} we must have $1,7\notin S(\mathcal F)$, and also $0,8\notin S(\mathcal F)$. Therefore, the only way to have $|S(\mathcal F)|=5$ is \(S(\mathcal F)=\{2,3,4,5,6\}\). In particular, $|\mathcal F_t|\ge 3$ for each $t\in\{2,3,4,5,6\}$. Choose an arbitrary subfamily $\mathcal F'\subseteq \mathcal F$ by selecting exactly $3$ sets
from each layer $t\in\{2,3,4,5,6\}$. Then $\mathcal F'$ is still an antichain, and moreover
\[
|\mathcal F'_t|=3\qquad \text{for each }t=2,3,4,5,6.
\]
Hence it suffices to rule out the existence of an antichain $\mathcal F'\subseteq 2^{[8]}$
with exactly three sets on each of the five levels $2,3,4,5,6$.

We performed an exhaustive backtracking search over all choices of three sets
from each of the levels $2,3,4,5,6$, pruning only by necessary antichain constraints.
For the top two levels we encode $6$-sets (resp.\ $5$-sets) by their complements,
which are $2$-sets (resp.\ $3$-sets), so that all containment constraints reduce to
explicit intersection/subset tests. The implementation is contained in~\cite[\texttt{check\_g\_8\_3.py}]{Git}; it exhausts all candidates and returns \texttt{None}, meaning that no feasible antichain $\cF'$ exists.

Hence, we conclude that $|S(\mathcal F)|\neq 5$ for every $3$-multiplicity antichain $\mathcal F\subseteq 2^{[8]}$. Therefore $g(8,3)\le 4$, as claimed.
\end{proof}

With Proposition~\ref{prop:g83} in hand, we can now determine the exact threshold $n_0(3)$.

\begin{theorem}\label{thm:n03}
One has $n_0(3)=8$.
\end{theorem}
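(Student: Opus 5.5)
We need to show two things: $n_0(3)\ge 8$ and $n_0(3)\le 8$.

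\textbf{Lower bound.} Proposition~\ref{prop:g83} gives $g(8,3)\le 4<8-3$. Hence the condition ``$g(n,3)=n-3$ for all $n>n_0$'' fails at $n=8$, and by Definition~\ref{def:threshold} any admissible $n_0$ must satisfy $n_0\ge 8$.

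\textbf{Upper bound.} It suffices to prove $g(n,3)=n-3$ for every integer $n\ge 9$.

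\emph{Reduction to a finite range.} Lemma~\ref{lem:no-n-2} gives $g(n,3)\le n-3$ for all $n\ge 4$, so only the lower bound $g(n,3)\ge n-3$ is needed. Proposition~\ref{thm:n0-upper-2r-logr} with $r=3$ yields $g(n,3)=n-3$ whenever
\[
n\ge 6+2\log_2 3+\log_2\log_2 3+15\approx 24.8,
\]
that is, for all $n\ge 25$. It therefore remains to treat $9\le n\le 24$.

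\emph{The finite range.} For each $n$ with $9\le n\le 24$ we will exhibit an explicit $3$-multiplicity antichain $\mathcal F\subseteq 2^{[n]}$ with $S(\mathcal F)=\{2,3,\dots,n-2\}$. We will produce these by computer search and record them in the repository~\cite{Git}, exactly as in the proof of Theorem~\ref{thm:n02}. Each certificate is checked mechanically:
\begin{itemize}
\item no inclusion holds between any two listed sets;
\item every level $t\in\{2,\dots,n-2\}$ contains at least three sets.
\end{itemize}

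To shorten the search, we will use the symmetric shape of the construction in Proposition~\ref{thm:n0-upper-2r-logr}. We look for an antichain $\widetilde{\mathcal F}$ on levels $2,\dots,\lfloor n/2\rfloor$ in which every set contains a fixed element $a$. We then adjoin the complements of its members. The argument given in that proof shows that the union is still an antichain, because every member of $\widetilde{\mathcal F}$ contains $a$ while every complement avoids it, and the sizes are compared exactly as there. This halves the number of levels the search must fill.

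\emph{Main obstacle.} The difficulty is the small end of the range, around $n=9$ and $10$. There the general construction does not fit: it needs $|V|=m+1$ label elements together with $k-2$ further elements, and these do not fit into $[n]$. In this range the antichains must be found ad hoc by backtracking, and the symmetric complement ansatz may also fail. If it does, we fall back to an unrestricted search over three sets per level, using the same complement encoding of the top levels as in Proposition~\ref{prop:g83}. Once these cases are certified, $g(n,3)=n-3$ for all $n>8$, so $n_0(3)\le 8$. Together with the lower bound this gives $n_0(3)=8$.
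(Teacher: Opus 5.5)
Your proposal is correct and follows the paper's proof. The lower bound comes from Proposition~\ref{prop:g83}. For the upper bound, Lemma~\ref{lem:no-n-2} gives $g(n,3)\le n-3$, Proposition~\ref{thm:n0-upper-2r-logr} covers $n\ge 25$, and explicit computer-certified antichains cover $9\le n\le 24$; your symmetric-complement search heuristic is an optional aid that does not change this logic.
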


\begin{proof}
Proposition~\ref{prop:g83} implies the lower bound $n_0(3)\ge 8$.
On the other hand, \eqref{eq:logloglog_upper1} yields the upper bound $n_0(3)\le 24$.
By Definition~\ref{def:threshold}, to prove $n_0(3)=8$ it is enough to verify that
$g(n,3)=n-3$ holds for every integer $n$ with $9\le n\le 24$. Moreover, Lemma~\ref{lem:no-n-2} gives the universal upper bound $g(n,3)\le n-3$ for all $n\ge 4$. Thus it suffices to construct, for each integer $9\le n\le 24$, a $3$-multiplicity antichain on $[n]$
whose occurring set sizes cover exactly $n-3$ distinct levels.
This yields $g(n,3)\ge n-3$ for all $9\le n\le 24$, and hence $g(n,3)=n-3$ in this range.
The explicit constructions are recorded in our GitHub repository~\cite[\texttt{r3\_n\_9\_to\_24.txt}]{Git}.
\end{proof}

\section*{Declaration of generative AI and AI-assisted technologies in the manuscript preparation process}

During the early stages of this project, an AI assistant was used for exploratory brainstorming and preliminary code drafting related to small-$r$ computational checks. All mathematical statements, proofs, and the final presentation of this paper were independently verified and written by the authors.

\end{document}